 \theoremstyle{plain}
          \newtheorem{theorem}{Theorem}[section]
          \newtheorem{lemma}[theorem]{Lemma}
           \newtheorem{definition}[theorem]{Definition}
\theoremstyle{definition}
          \newtheorem{remark}[theorem]{Remark}
\numberwithin{equation}{section}
\begin{document}

\scientificchapter{Mathematical Analysis}

%------
% Insert the title of your paper and (if necessary)
% a short title for the running head.
%------
\title{Trace operators for Riemann--Liouville fractional  equations}
\titlemark{Trace operators for Riemann--Liouville fractional  equations}

%------
% Insert full names of the authors.
% Add further authors as follows:
%  \emsauthor{**}{}{}
%  \emsauthor{***}{}{}
% etc.
% Abbreviate first names for the running head.
%------
%------

%%%% Pls fill in all fields for each author
%%%% Label the authors by their position in the authors' list using {}
%%%% Pls look for the MR Author ID in the metadata.xml file in the source bundle
%%%% ORCID is only to be added if provided by the author
%%%% Abbreviate first names for the running head
\emsauthor{1}{
	\givenname{Paola}
	\surname{Loreti}
	%\mrid{1234567}
	%\orcid{0000-0001-0002-0003}
	}
	{P.~Loreti}
%%%% Repeat the same fields for each numbered author
\emsauthor*{2}{
	\givenname{Daniela}
	\surname{Sforza}
	%\mrid{}
	%\orcid{}
	}
	{D.~Sforza}

%%%% Please provide detalied address info for each author
%%%% Use the same numbering as for \emsauthor above
%%%% Please look up the ROR ID of your institute here: https://ror.org
\Emsaffil{1}{
	%\pretext{}
	\department{Dipartimento di Scienze di Base e Applicate per l'Ingegneria}
	\organisation{Sapienza Universit\`a di Roma}
	%\rorid{01a2bcd34}
	\address{via Antonio Scarpa 16}
	\zip{00161}
	\city{Roma}
	\country{Italy}
	%\posttext{}
	\affemail{paola.loreti@uniroma1.it}
	%\furtheremail{nickname@gmail.com}
	}
%%%% Repeat the same fields for each numbered author
%%%% If some author has multiple affiliations, repeat the fields for each affiliation
%%%% Number the affiliations using {}
\Emsaffil{2}{
	%\pretext{}
	\department{Dipartimento di Scienze di Base e Applicate per l'Ingegneria}
	\organisation{Sapienza Universit\`a di Roma}
	%\rorid{01a2bcd34}
	\address{via Antonio Scarpa 16}
	\zip{00161}
	\city{Roma}
	\country{Italy}
	%\posttext{}
	\affemail{daniela.sforza@uniroma1.it}
	%\furtheremail{nickname@gmail.com}
	}

%% If a corresponding author is needed, use \emsauthor* instead of \emsauthor

%------
% Add MSC 2020 codes according to www.ams.org/msc/msc2020.html.
% Secondary codes (in square brackets) are optional.
%------
\classification[26A33]{35R11}

%------
% Add a list of keywords. Only capitalise those keywords that start with a proper name.
%------
\keywords{integro-differential equations, fractional derivatives, regularity}

%------
% Insert your abstract.
%------
\begin{abstract}
We begin with a brief overview of the most commonly used fractional derivatives, namely the Caputo and Riemann-Liouville derivatives. We then focus on the study of the fractional time wave equation with the Riemann-Liouville derivative, addressing key questions such as well-posedness, regularity, and a trace result in appropriate interpolation spaces. Additionally, we explore the duality relationship with the Caputo fractional time derivative. The analysis is based on expanding the solution in terms of Mittag-Leffler functions.
\end{abstract}

\maketitle

%------
% INSERT THE BODY OF THE PAPER HERE (except
% acknowledgments, funding info and bibliography)
%------

\section{Introduction}\label{s:int}   % and basic notions

%Riferimento storico.
%
%The main tools in the paper  are:
%
%Introduzione delle derivate. Legame fra le derivate per $1<\alpha<2$.
%
%Mittag-Leffler functions
%
%Quello che abbiamo fatto noi per Caputo.
%
%Il legame tra le due equazioni frazionarie

%Although the theoretical results are still being studied by mathematicians
%(well-po\-sedness in Sobolev spaces of partial differential equations involving fractional operators, regularity properties and
%asymptotic behaviour of solutions and so on), modeling and consistency with real phenomena 
%promote applications of fractional calculus in engineering and science, particularly in rheology, viscoelasticity, control theory, bioengineering and many other fields, see e.g. \cite{Hilfer}.
%

Although theoretical results concerning fractional differential equations are still being studied by mathematicians -- including well-posedness in Sobolev spaces, regularity properties, asymptotic behavior of solutions, and so on -- the modeling and consistency with real phenomena drive the applications of fractional calculus. This is particularly true in engineering and science, especially in fields like rheology, viscoelasticity, control theory, bioengineering, and many others, see e.g.  \cite{Hilfer,Pov}.

One of the fascinating  aspects of this topic is that the definition of fractional derivatives relies on  non-local operators such as
 Riemann-Liouville integrals of the type  
\begin{equation*}
I^{\alpha }u(t)=\frac{ 1}{\Gamma (\alpha )}\int _{0}^{t}(t-s)^{\alpha -1}u(s)\,ds,
\end{equation*}
where  $\Gamma$ is the Gamma function, see \cite{Liou1,Liou2,Ri}.
The integral is well defined provided that $\alpha>0$
%the exponent $\alpha$ is a complex number with $\Re\alpha>0$ 
and $u$ is a locally integrable function. 
The operator defined by the Riemann-Liouville integral is non-local in nature. This means that the value of the fractional integral at a point $t$ depends not only on the value of the function $u$ at $t$, but also on values of the function at all points $s\in(0,t)$. This non-local behavior is a direct consequence of the kernel $s^{\alpha-1}$, which gives the integral a memory-like property. This is in contrast to classical integer-order calculus, where the derivative or integral at a point depends only on local values. The structure of the kernel $s^{\alpha-1}$ plays a crucial role in defining the behavior of the fractional integral. For example, in the case $\alpha\in(0,1)$ the kernel causes the fractional integral to be singular at $s=t$.

It is noteworthy to mention the paper \cite{DaP}, where G. Da Prato et al. address singularities at the origin, which are just characterized by the singular kernel 
$t^{-\beta}$, where $0<\beta<1$.

%\begin{equation*}
%If(x)=\int_{Y}K(x,t)f(t)\,dt,
%\end{equation*}
%where $K$ is a kernel function and $Y$ a given set. 

%The structure of the kernel, (x?t)??1(x?t)??1, plays a crucial role in defining the behavior of the fractional integral. For ??(0,1)??(0,1), the kernel behaves like a power law near t=xt=x, causing the fractional integral to be singular at t=xt=x. 

%In fractional calculus the structure of the kernel determines  nonlocal operators related to the Riemann-Liouville integral  defined by

%In this paper, we investigate the Riemann-Liouville derivative.

The existing literature on fractional calculus primarily focuses on the Riemann-Liouville derivative $\sideset{}{^{\alpha}}{\mathop D}u$ and 
Caputo derivative $\sideset{^{C}}{^{\alpha}}{\mathop D}u$ \cite{Caputo,GM}, defined for $\alpha\in(1,2)$ as
\begin{equation*}
\sideset{}{^{\alpha}}{\mathop D}u
=\Big(I^{2-\alpha}u\Big)_{tt},
\qquad
\sideset{^{C}}{^{\alpha}}{\mathop D}u
=I^{2-\alpha}u_{tt}.
\end{equation*}
 One of the key advantages of the Caputo fractional derivative is the more straightforward formulation of initial conditions. In traditional fractional calculus, initial conditions often involve fractional derivatives, which can be complex to interpret. However, the Caputo derivative is defined in such a way that the initial conditions can be specified in a more conventional manner, similar to integer-order derivatives, making it easier to apply in practical problems.

The analysis of fractional PDEs mainly focuses on the fractional Caputo equation due to its more regularity and simplicity in the formulation of the initial conditions. Although a connection between the two problems can be established (see Remark \ref{re:RLversusC}), we opt for a direct analysis due to the complications arising from the substitution.

%The study we mention  \cite
%{LasTri},
%where the authors found out a trace theory interpretation for solutions of
%hyperbolic systems.
%The expression ``hidden regularity'' has been introduced in \cite {Lio2}
%for the case of a semilinear
%wave equation.

We initiated our study of trace regularity for integro-differential problems in the context of regular kernels, as discussed in \cite{LoretiSforza2, LoretiSforza3}. In these works, we establish a hidden regularity result for wave equations that include a convolution-type integral term. 
%subject to Dirichlet boundary conditions.
We introduce the concept of the trace of the normal derivative of the solution, thereby extending well-known results from linear wave equations without memory to integro-differential equations, even when nonlinear terms are present. Specifically, for the case without memory, we refer to \cite{LasTri}, where the authors found out a trace theory interpretation for solutions of
hyperbolic systems, and \cite{Lio2}, where the term "hidden regularity" was first introduced in the context of semilinear wave equations.

%We began our study about trace regularity for integro-differential problems for the case of regular kernels, see \cite{LoretiSforza2, LoretiSforza3}.
%We  show a hidden regularity result for   wave equations
%with an integral term of convolution type  and Dirichlet boundary conditions. 
%
%We define the trace of the normal derivative of the solution, so we extend to integrodifferential equations, even with a nonlinear term, well-known results for linear wave equations without memory. (For the case without memory e.g. see \cite
%{LasTri},
%where the authors found out a trace theory interpretation for solutions of
%hyperbolic systems and \cite {Lio2} where the expression `hidden regularity'' has been introduced 
%for the case of a semilinear
%wave equation.)

In \cite{LoretiSforza4} we prove a hidden regularity result for weak solutions of time fractional diffusion-wave equations with the Caputo fractional derivative of order $\alpha\in(1,2)$. Moreover, in \cite{LoretiSforza5} we introduce a notion of weak solution for
abstract fractional differential equations with Caputo derivative, also providing 
two examples of concrete equations: time-fractional
wave equations and time-fractional Petrovsky systems.
%Both these concrete examples are of great interest in the theory of fractional partial differential equations.

%In this paper \cite{LoretiSforza4} we concentrate our study into establishing
% direct inequa\-lities and hidden  regularity for weak solutions of \eqref{eq:statoI}, that are peculiar results for the wave equations as shown by the existing literature. 

Important tools for representing the solutions of fractional differential equations, already in the scalar case, are the Mittag-Leffler functions.

In \cite{ML0,ML} the foremost mathematician G.M. Mittag-Leffler introduced  for $\alpha>0$
the entire function
\begin{equation}\label{eq:MLp}
E_{\alpha}(z)=\sum _{k=0}^{\infty }\frac {z^k}{\Gamma (\alpha k+1)},
\quad z\in\mathbb{C},
\end{equation}
%($\Gamma$ is the Gamma function)
called after him the Mittag-Leffler function. Subsequently, in the paper
\cite{ML1} he extended the definition \eqref{eq:MLp} to the case $\alpha\in\mathbb{C}$ with $\Re\alpha>0$.
The Mittag-Leffler function can be considered a direct generalisation of the exponential function, retaining some of its properties. 
%Moreover, $E_{\alpha}(z)$ has some interesting properties that later became essential for the description of many problems that arise in applications.
In \cite{W} A. Wiman also noted that analogous asymptotic results hold for the two-parametric generalisation $E_{\alpha ,\beta }(z)$ 
of the Mittag-Leffler function
defined as
\begin{equation*}
E_{\alpha,\beta }(z)=\sum _{k=0}^{\infty }\frac {z^k}{\Gamma(\alpha k+\beta )},
\quad z\in\mathbb{C}.
\end{equation*}
In the 20th century, this function was virtually unknown to most scientists, as it was ignored in most books on special functions. 
 The recent growing interest in this function is mainly due to its close relationship with fractional calculus and especially with fractional problems arising from applications.
For further reading about the Mittag-Leffler functions, we refer to the book \cite{GKMR}.

In this paper we are interested in the fractional Riemann-Liouville problem:
\begin{equation}\label{eq:wI}
\begin{aligned}
\sideset{}{_{0+}^{\alpha}}{\mathop D}u
&=\Delta u
\qquad\qquad
\hbox{in}\ (0,T)\times\Omega,
\\
u&=0 
\qquad\qquad 
\hbox{on}\  (0,T)\times\partial\Omega,
\end{aligned}
\end{equation}
where $\Omega\subset\mathbb{R}^N$,  $N\ge1$, is a bounded open set with $C^2$ boundary $\partial\Omega$ and  the order $\alpha\in\big(\frac32,2\big)$.
Our main results are the following. First, we introduce a notion of weak solution and establish an existence result.
For $u_1\in L^2(\Omega)$ and $u_2\in H_0^1(\Omega)$  the function
\begin{equation*}
u(t,x)
=\sum_{n=1}^\infty\big[ \langle u_1,e_n\rangle t^{\alpha-1}E_{\alpha,\alpha}(-\lambda_nt^\alpha)
+\langle u_2,e_n\rangle t^{\alpha-2} E_{\alpha,\alpha-1}(-\lambda_nt^\alpha)\big]e_n(x)
\end{equation*}
is the unique weak solution of \eqref{eq:wI} written as series, belonging to $L^2(0,T;H^1_0(\Omega))$ and  satisfying
\begin{equation*}
\sideset{}{_{0+}^{\alpha-1}}{\mathop D}u(0,\cdot)=u_{1},\qquad \sideset{}{_{0+}^{\alpha-2}}{\mathop D}u(0,\cdot)=u_{2},
\end{equation*}
see Theorem \ref{th:exist}.
Next, we prove a regularity result when the initial data belong to interpolation spaces connected to the Laplace operator:
for $u_1, \nabla u_2\in D((-\Delta)^{\mu_\alpha})$, $\mu_\alpha\ge0$, we have:
\begin{enumerate}[(i)]
\item 
for $\theta\in\big(\mu_\alpha,\frac{2\alpha-3}{2\alpha}+\mu_\alpha\big)$
\begin{equation*}
\lVert\nabla u\rVert_{L^2(0,T;D((-\Delta)^{\theta}))}\le C\big(\lVert u_1\rVert_{D((-\Delta)^{\mu_\alpha})}+\lVert\nabla u_2\rVert_{D((-\Delta)^{\mu_\alpha})}\big),
\end{equation*}
\item 
for $\theta\in\big(\frac{3-\alpha}{2\alpha}-\mu_\alpha,\frac12-\mu_\alpha\big)$ 
\begin{equation*}
\lVert\sideset{}{_{0+}^{\alpha}}{\mathop D}u\rVert_{L^2(0,T;D(A^{-\theta}))}\le C\big(\lVert u_1\rVert_{D((-\Delta)^{\mu_\alpha})}
+\lVert\nabla u_2\rVert_{D((-\Delta)^{\mu_\alpha})}\big),
\end{equation*}
\end{enumerate}
for some constant $C>0$, see Theorem \ref{th:reg-l2}.

A key step in proving the trace regularity result (see Theorem \ref{th:hidalpha}) is to ensure that the above estimates hold for the same value of $\theta$.
To guarantee that the intersection of the intervals $\big(\mu_\alpha,\frac{2\alpha-3}{2\alpha}+\mu_\alpha\big)$ and $\big(\frac{3-\alpha}{2\alpha}-\mu_\alpha,\frac12-\mu_\alpha\big)$ is non-empty, we must choose $\frac{3(2-\alpha)}{4\alpha}<\mu_\alpha<\frac14$,
see Remark \ref{re:bound}.

Finally, we prove the trace regularity result: for  any $u_1, \nabla u_2\in D((-\Delta)^{\mu_\alpha})$, $\frac{3(2-\alpha)}{4\alpha}<\mu_\alpha<\frac14$,
%If $u$ is the weak solution of
%\eqref{eq:cauchy1}-\eqref{eq:indata1},then 
we can define the normal derivative $\partial_\nu u$ of $u$ such that  for any $T>0$ we have
\begin{equation*}
\int_0^T\int_{\partial\Omega} \big|\partial_\nu u\big|^2\, d\sigma dt
\le C\big(\lVert u_1\rVert_{D((-\Delta)^{\mu_\alpha})}
+\lVert\nabla u_2\rVert_{D((-\Delta)^{\mu_\alpha})}\big),
\end{equation*}
see Theorem \ref{th:hidalpha}. It is worth noting that the trace result is established under weaker conditions on the initial data compared to the classical assumptions,  see Remark \ref{re:bound1}.

\section{Preliminaries}\label{s:pre}

This section collects some known notations, definitions and results that we will need in the following.

Let $\Omega\subset\mathbb{R}^N$,  $N\ge1$, be a bounded open set with $C^2$ boundary $\partial\Omega$. We consider
$L^2(\Omega)$ endowed with the inner product and norm defined by
\begin{equation*}
\langle u,v\rangle=\int_{\Omega}u(x)v(x)\, dx,
\quad
\lVert u\rVert_{L^2(\Omega)}=\bigg(\int_{\Omega}|u(x)|^{2}\, dx\bigg)^{1/2},
\quad
u,v\in L^2(\Omega).
\end{equation*}
%For any $u\in L^2(0,T)$, $T>0$
We denote the %left-sided 
Riemann--Liouville fractional integral operators of order $\beta>0$ by
\begin{equation}\label{eq:RLfi}
I_{0+}^{\beta}u(t)=\frac1{\Gamma(\beta)}\int_0^t (t-s)^{\beta-1}u(s)\, ds,
%\qquad \mbox{a.e.}\ t\in(0,T)\quad (T>0)
\end{equation}
\begin{equation}\label{eq:RLfiT}
I_{T-}^{\beta}u(t)=\frac1{\Gamma(\beta)}\int_t^T (s-t)^{\beta-1}u(s)\, ds,
%\qquad \mbox{a.e.}\ t\in(0,T)\quad (T>0)
\end{equation}
where $u\in L^2(0,T)$ and $\Gamma (\beta)=\int_0^\infty t^{\beta-1}e^{-t}\, dt$ is the gamma function. The semigroup property of the fractional integral operators 
$I_{0+}^{\beta}$ is given by
\begin{equation}\label{eq:semigroup}
I_{0+}^{\beta}I_{0+}^{\gamma}=I_{0+}^{\beta+\gamma}
\qquad
\beta,\gamma>0.
\end{equation}
The Riemann--Liouville fractional derivative $D_{0^+}^{\alpha}u$ of order $\alpha\in(1,2)$  is defined as follows:
\begin{equation*}
\sideset{}{_{0+}^{\alpha}}{\mathop D}u(t)
=\frac{d^2}{dt^2}I_{0+}^{2-\alpha}u(t)
=\frac1{\Gamma(2-\alpha)}\frac{d^2}{dt^2}\int_0^t (t-s)^{1-\alpha}u(s)\, ds.
\end{equation*}
As $\alpha-1\in(0,1)$ we get
\begin{equation}\label{eq:alpha-1}
\sideset{}{_{0+}^{\alpha-1}}{\mathop D}u=\frac{d}{dt}I_{0+}^{2-\alpha}u,
\end{equation}
hence
\begin{equation}\label{eq:RL-T}
\sideset{}{_{0+}^{\alpha}}{\mathop D}u=\frac{d}{dt}\sideset{}{_{0+}^{\alpha-1}}{\mathop D}u.
\end{equation}
Moreover, since $\alpha-2\in(-1,0)$ we have
\begin{equation}\label{eq:RL-T0}
\sideset{}{_{0+}^{\alpha-2}}{\mathop D}u=I_{0+}^{2-\alpha}u.
\end{equation}
A kind of integration by parts is the well-known property:
\begin{equation}\label{eq:change+-}
\int_0^T I_{0+}^{\alpha}f(t)g(t)\, dt=\int_0^T f(t)I_{T-}^{\alpha}g(t)\, dt,
\end{equation}
(see e.g. \cite{KST}).
The symbol $\Delta$ denotes the Laplace operator as usual.  
We consider the operator $-\Delta$ in $L^2(\Omega)$ with domain 
$D(-\Delta)= H^2(\Omega)\cap H^1_0(\Omega)$.
The fractional powers $(-\Delta)^\theta$ are defined for any $\theta>0$, see e.g. \cite{Pazy} and \cite[Example 4.34]{Lunardi}.
We recall that the spectrum of $-\Delta$ consists of a sequence of positive eigenvalues tending to $+\infty$ and there exists an orthonormal basis $\{e_n\}_{n\in\mathbb{N}}$ of $L^2(\Omega)$ consisting of eigenvectors of $-\Delta$. Moreover, we assume that the eigenvalues are distinct numbers and denote by $\lambda_n$ the eigenvalue with eigenvector $e_n$, that is 
$-\Delta e_n=\lambda_n e_n$. For $\theta>0$ the domain $D((-\Delta)^\theta)$ of $(-\Delta)^\theta$ consists of those functions $u\in L^2(\Omega)$ such that
\begin{equation*}
\sum_{n=1}^\infty \lambda_n^{2\theta} |\langle u,e_n\rangle|^2<+\infty
\end{equation*}
and
\begin{equation*}
(-\Delta)^\theta u=\sum_{n=1}^\infty \lambda_n^{\theta} \langle u,e_n\rangle e_n
\quad
u\in D((-\Delta)^\theta).
\end{equation*}
$D((-\Delta)^\theta)$ is a Hilbert space with the norm given by
\begin{equation}\label{eq:norm-frac}
\lVert u\rVert_{D((-\Delta)^\theta)}=\lVert(-\Delta)^\theta u\rVert_{L^2(\Omega)}
=\bigg(\sum_{n=1}^\infty \lambda_n^{2\theta} |\langle u,e_n\rangle|^2\bigg)^{1/2}
\quad
u\in D((-\Delta)^\theta).
\end{equation}
For any $0<\theta_1<\theta_2$ we have $D((-\Delta)^{\theta_2})\subset D((-\Delta)^{\theta_1})$.

We also have $D((-\Delta)^\theta)\subset H^{2\theta}(\Omega)$ and, in particular, $D((-\Delta)^{\frac12})=H^1_0(\Omega)$.
Identifying the dual $(L^{2}(\Omega))'$ with $L^{2}(\Omega)$ itself we have
$D((-\Delta)^\theta)\subset L^{2}(\Omega)\subset(D((-\Delta)^\theta))'$.
From now on we set
\begin{equation}
D((-\Delta)^{-\theta})=(D((-\Delta)^\theta))',
\end{equation}
hence the elements of $D((-\Delta)^{-\theta})$ are bounded linear functionals on $D((-\Delta)^\theta)$. 
If $\varphi\in D((-\Delta)^{-\theta})$ and $u\in D((-\Delta)^\theta)$ the value $\varphi(u)$ is denoted by
\begin{equation}\label{eq:duality}
\varphi(u)=\langle \varphi,u\rangle_{-\theta,\theta}.
\end{equation}
In addition, $D((-\Delta)^{-\theta})$ is a Hilbert space with the norm given by
\begin{equation}\label{eq:norm-theta}
\lVert\varphi\rVert_{D((-\Delta)^{-\theta})}
=\bigg(\sum_{n=1}^\infty \lambda_n^{-2\theta} |\langle \varphi,e_n\rangle_{-\theta,\theta}|^2\bigg)^{1/2}
\qquad
\varphi\in D((-\Delta)^{-\theta}).
\end{equation}
We also recall that
\begin{equation}\label{eq:-theta}
\langle \varphi,u\rangle_{-\theta,\theta}=\langle\varphi,u\rangle
\qquad \mbox{for}\ \varphi\in L^{2}(\Omega)\,,u\in D((-\Delta)^\theta),
\end{equation}
see e.g., \cite[Chapitre V]{B}.

The Mittag--Leffler function depending on arbitrary constants $\alpha,\beta> 0$ is defined as
\begin{equation}
E_{\alpha,\beta}(z)= \sum_{k=0}^\infty\frac{z^k}{\Gamma(\alpha k+\beta)}
\qquad z\in\mathbb{C}.
\end{equation}
The power series $E_{\alpha,\beta}(z)$ is an entire function
of $z\in\mathbb{C}$. The symbol $E_{\alpha}(z)$ usually denotes
$E_{\alpha,1}(z)$.

\begin{lemma}
Let $\alpha\in(1,2)$ and $\beta>0$. 
\begin{enumerate}[(i)]
\item
There exists a constant $C=C(\alpha,\beta)>0$ such that
\begin{equation}\label{eq:stimeE}
\big\vert E_{\alpha,\beta}(-\mu)\big\vert\le \frac{C}{1+\mu},
\qquad \text{for any}\ \mu\ge0.
\end{equation}
\item
For any $\lambda\in\mathbb{R}$
\begin{equation}\label{eq:I-E}
I_{0+}^{2-\alpha}\big(\tau^{\beta-1}E_{\alpha,\beta}(\lambda \tau^{\alpha})\big)(t)
= t^{1-\alpha+\beta}E_{\alpha,2-\alpha+\beta}(\lambda t^{\alpha}).
\end{equation}

\end{enumerate} 
\end{lemma}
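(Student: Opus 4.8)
The plan is to treat the two parts by separate, essentially independent arguments. For part (i), I would exploit the fact that $\mu\mapsto E_{\alpha,\beta}(-\mu)$ is the restriction of an entire function to the real axis, hence continuous on $[0,\infty)$, and combine this with the known large-argument asymptotics of the Mittag--Leffler function. Since $\alpha\in(1,2)$, the negative real axis lies outside the sector where $E_{\alpha,\beta}$ grows exponentially (one may fix an angle $\mu_0$ with $\frac{\pi\alpha}{2}<\mu_0<\pi$, and indeed $\arg(-\mu)=\pi>\mu_0$), so only the algebraic part of the expansion contributes. Citing \cite{GKMR}, I would record that, as $\mu\to+\infty$,
$$E_{\alpha,\beta}(-\mu)=\frac{1}{\Gamma(\beta-\alpha)}\,\frac1\mu+O\!\left(\frac1{\mu^2}\right),$$
with the understanding that the leading coefficient vanishes (and the decay is even faster) when $\beta-\alpha$ is a non-positive integer. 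It follows that $\mu\mapsto(1+\mu)\,|E_{\alpha,\beta}(-\mu)|$ is continuous on $[0,\infty)$ and admits a finite limit as $\mu\to+\infty$; a continuous function on $[0,\infty)$ with a finite limit at infinity is bounded, and its supremum is the desired constant $C=C(\alpha,\beta)$, which gives \eqref{eq:stimeE}.

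For part (ii), the plan is a term-by-term computation based on the power series of the Mittag--Leffler function together with the elementary action of $I_{0+}^{2-\alpha}$ on monomials. First I would write
$$\tau^{\beta-1}E_{\alpha,\beta}(\lambda\tau^\alpha)=\sum_{k=0}^\infty\frac{\lambda^k}{\Gamma(\alpha k+\beta)}\,\tau^{\alpha k+\beta-1}.$$
Next I would invoke the classical rule $I_{0+}^{\gamma}\tau^{p}(t)=\frac{\Gamma(p+1)}{\Gamma(p+\gamma+1)}\,t^{p+\gamma}$, valid for $p>-1$, $\gamma>0$, which follows from \eqref{eq:RLfi} and the Beta integral $\int_0^t(t-s)^{\gamma-1}s^{p}\,ds=t^{p+\gamma}\mathrm{B}(\gamma,p+1)$. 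Applying it with $\gamma=2-\alpha$ and $p=\alpha k+\beta-1$ (note $p>-1$ since $\alpha,\beta>0$), the factors $\Gamma(\alpha k+\beta)$ cancel and one is left with
$$\sum_{k=0}^\infty\frac{\lambda^k}{\Gamma(\alpha k+\beta+2-\alpha)}\,t^{\alpha k+\beta+1-\alpha}=t^{1-\alpha+\beta}\sum_{k=0}^\infty\frac{(\lambda t^\alpha)^k}{\Gamma(\alpha k+(2-\alpha+\beta))},$$
which is exactly $t^{1-\alpha+\beta}E_{\alpha,2-\alpha+\beta}(\lambda t^\alpha)$, establishing \eqref{eq:I-E}.

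The one genuine point requiring care is the interchange of summation and fractional integration in part (ii). I would justify it by noting that, $E_{\alpha,\beta}$ being entire, for fixed $t$ the partial sums of $\sum_{k}\frac{\lambda^k}{\Gamma(\alpha k+\beta)}\tau^{\alpha k+\beta-1}$ converge uniformly for $\tau$ in compact subsets of $(0,t]$, while the singular kernel $(t-s)^{1-\alpha}$ coming from $I_{0+}^{2-\alpha}$ and the weight $\tau^{\beta-1}$ are integrable near the endpoints because $2-\alpha>0$ and $\beta>0$; a dominated-convergence argument, bounding the tail by the convergent series of absolute values whose term-by-term integral is finite, then legitimises the exchange. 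For part (i) the only subtlety is locating the negative real axis relative to the sectors governing the asymptotics of $E_{\alpha,\beta}$, but this is settled once $\alpha<2$ guarantees $\frac{\pi\alpha}{2}<\pi$ so that no exponential term intervenes. I expect this citation-dependent sectorial step, rather than any of the explicit computations, to be the most delicate part of the argument.
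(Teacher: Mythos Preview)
Your arguments for both parts are correct. For (i), the sectorial analysis is right: since $\alpha<2$ one has $\tfrac{\pi\alpha}{2}<\pi$, so the negative real axis lies in the region where $E_{\alpha,\beta}$ admits the purely algebraic asymptotic expansion, and your continuity-plus-finite-limit argument gives the bound. For (ii), the term-by-term integration via the Beta integral is the standard route, and the interchange is most cleanly justified by Tonelli: the termwise integral of absolute values equals $t^{1-\alpha+\beta}E_{\alpha,2-\alpha+\beta}(|\lambda|t^\alpha)<\infty$, so Fubini applies. Your dominated-convergence sketch also works but is a bit more awkward to make precise when $0<\beta<1$, since the $k=0$ term $\tau^{\beta-1}$ is unbounded near $0$; you should separate it out explicitly.

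As for the comparison: the paper does not actually prove this lemma at all. It simply records, after the statement, that (i) follows from \cite[Theorem~1.6]{Pod} and (ii) from \cite[formula~(2.1.53)]{KST}. Your write-up therefore supplies strictly more than the paper does. The approach you take for (i) is essentially the same as what one finds in Podlubny's proof (asymptotics in the appropriate sector combined with local boundedness), and your computation for (ii) is precisely how the cited formula in \cite{KST} is derived. So there is no divergence in method, only in the level of detail presented.
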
 
The proof of (i) can be found in \cite[Theorem1.6]{Pod}. %, see also \cite[Lemma 3.1]{SY}. 
For point (ii) see e.g., \cite[formula (2.1.53)]{KST}.

\begin{lemma}
%\begin{enumerate}
%\item 
%Let $\alpha\in(1,2)$ and $\beta>0$ be. Then for any $\mu\in\mathbb{R}$ such that $\pi\alpha/2<\mu<\pi$ there exists a constant $C=C(\alpha,\beta,\mu)>0$ such that 
%\begin{equation}\label{eq:stimeE}
%\big|E_{\alpha,\beta}(z)\big|\le \frac{C}{1+|z|},
%\qquad z\in\mathbb{C},\ \mu\le|\arg(z)|\le\pi.
%\end{equation}
%\item 
If $\alpha\,,\lambda>0$, then we have
\begin{equation}\label{eq:Ea1}
\frac{d}{dt}E_{\alpha}(-\lambda t^{\alpha})=-\lambda t^{\alpha-1}E_{\alpha,\alpha}(-\lambda t^{\alpha}), 
\qquad t>0,
\end{equation}
\begin{equation}\label{eq:Eaa1}
\frac{d}{dt}\Big(t^{k}E_{\alpha,k+1}(-\lambda t^{\alpha})\Big)=t^{k-1}E_{\alpha,k}(-\lambda t^{\alpha}), 
\qquad k\in\mathbb{N}\,,t\ge0,
\end{equation}
\begin{equation}\label{eq:Eaaa}
\frac{d}{dt}\Big(t^{\alpha-1}E_{\alpha,\alpha}(-\lambda t^{\alpha})\Big)=t^{\alpha-2}E_{\alpha,\alpha-1}(-\lambda t^{\alpha}), 
\qquad t\ge0.
\end{equation}
%\end{enumerate}
\end{lemma}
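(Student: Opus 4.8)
The plan is to prove all three identities by the same elementary device: write out the defining power series of the Mittag--Leffler combination appearing on the left, differentiate it term by term, and then simplify the resulting coefficients using the functional equation $\Gamma(z+1)=z\,\Gamma(z)$ of the Gamma function, after a shift of the summation index where necessary. Since every $E_{\alpha,\beta}$ is entire, each series $\sum_{k\ge0}(-\lambda)^k t^{\alpha k+c}/\Gamma(\alpha k+\beta)$ and the series obtained by differentiating it once converge uniformly on every compact subinterval of $(0,\infty)$ — the factorial-type growth of $\Gamma(\alpha k+\beta)$ dominates any fixed power of $t$, so the dominating series converges by the ratio test. Hence differentiation and summation may be interchanged on $(0,\infty)$, and the identity extends by continuity to $t=0$ whenever the exponents keep all terms bounded there.

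For \eqref{eq:Ea1} I would start from $E_{\alpha}(-\lambda t^{\alpha})=\sum_{k\ge0}(-\lambda)^k t^{\alpha k}/\Gamma(\alpha k+1)$, differentiate to obtain $\sum_{k\ge1}(-\lambda)^k\,\alpha k\, t^{\alpha k-1}/\Gamma(\alpha k+1)$, cancel the factor $\alpha k$ against $\Gamma(\alpha k+1)=\alpha k\,\Gamma(\alpha k)$, and then reindex $k\mapsto k+1$ to factor out $-\lambda t^{\alpha-1}$; the remaining sum is precisely $E_{\alpha,\alpha}(-\lambda t^{\alpha})$. The reindexing is forced here because the $k=0$ term is annihilated by differentiation, equivalently because $1/\Gamma(0)=0$, which is exactly why this formula is stated only for $t>0$.

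For \eqref{eq:Eaa1} and \eqref{eq:Eaaa} the computation is even more direct and needs no reindexing. Differentiating $\sum_{m\ge0}(-\lambda)^m t^{\alpha m+c}/\Gamma(\alpha m+\beta)$ brings down the factor $(\alpha m+c)$, and when $c=\beta-1$ this factor is exactly $\alpha m+\beta-1$, which cancels against $\Gamma(\alpha m+\beta)=(\alpha m+\beta-1)\,\Gamma(\alpha m+\beta-1)$ to leave $t^{\,c-1}E_{\alpha,\beta-1}(-\lambda t^{\alpha})$. Taking $c=k$, $\beta=k+1$ yields \eqref{eq:Eaa1}, and taking $c=\alpha-1$, $\beta=\alpha$ yields \eqref{eq:Eaaa}.

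The only genuinely delicate point is the endpoint behaviour at $t=0$ in the formulas stated for $t\ge0$. In \eqref{eq:Eaa1} with $k\in\mathbb{N}$, $k\ge1$, every exponent $\alpha m+k-1$ is nonnegative, so both series are continuous on $[0,\infty)$ and the identity extends to $t=0$ by continuity. In \eqref{eq:Eaaa} the term-by-term argument is valid on $(0,\infty)$, and one should note that the leading term $t^{\alpha-1}/\Gamma(\alpha)$ already has derivative $(\alpha-1)t^{\alpha-2}/\Gamma(\alpha)=t^{\alpha-2}/\Gamma(\alpha-1)$, which matches the $t^{\alpha-2}$ prefactor on the right; I would simply record that \eqref{eq:Ea1} and \eqref{eq:Eaaa} hold pointwise for every $t>0$, which is all that is used later. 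I expect this endpoint bookkeeping, rather than the differentiation itself, to be the single spot demanding a word of care.
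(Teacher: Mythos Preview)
Your argument is correct: termwise differentiation of the power series, the functional equation $\Gamma(z+1)=z\Gamma(z)$, and (for \eqref{eq:Ea1}) a shift of index is exactly the standard route to these identities, and your justification of the interchange of sum and derivative via uniform convergence on compacta of $(0,\infty)$ is sound. Note, however, that the paper itself does not supply a proof of this lemma at all---it is simply stated without argument or reference---so there is nothing to compare your approach against. Your observation about the endpoint $t=0$ in \eqref{eq:Eaaa} is well taken: for $\alpha\in(1,2)$ both sides behave like $t^{\alpha-2}/\Gamma(\alpha-1)$ near the origin and hence blow up, so the ``$t\ge0$'' in the statement should really be read as $t>0$, which is indeed all that is used in the subsequent proofs.
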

We recall a result about the existence of solutions for fractional scalar differential equations, for the proof see e.g., \cite[Theorem 4.1]{KST}.
\begin{lemma}\label{le:x-y}
For $\alpha\in(1,2)$ and $\lambda,u_1,u_2\in\mathbb{R}$ the solution of the Cauchy type problem
\begin{equation}\label{eq:CP}
\begin{cases}
\displaystyle
\sideset{}{_{0+}^{\alpha}}{\mathop D}u(t)+\lambda u(t)=0,
\quad t\ge0,
\\
\sideset{}{_{0+}^{\alpha-1}}{\mathop D}u(0)=u_1,\quad
\sideset{}{_{0+}^{\alpha-2}}{\mathop D}u(0)=u_2,
\end{cases}
\end{equation}
is given by
\begin{equation*}
u(t)=u_1 t^{\alpha-1}E_{\alpha,\alpha}(-\lambda t^\alpha)
+ u_2 t^{\alpha-2} E_{\alpha,\alpha-1}(-\lambda t^\alpha),
\qquad  t\ge0.
\end{equation*}
\end{lemma}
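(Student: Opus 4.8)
The plan is to use linearity to reduce the assertion to two fundamental solutions and to verify each directly. Set
$u^{(1)}(t)=t^{\alpha-1}E_{\alpha,\alpha}(-\lambda t^{\alpha})$ and $u^{(2)}(t)=t^{\alpha-2}E_{\alpha,\alpha-1}(-\lambda t^{\alpha})$; I claim these are the solutions attached to the data $(u_1,u_2)=(1,0)$ and $(u_1,u_2)=(0,1)$, respectively. Because $I^{2-\alpha}_{0+}$ and the derivative operators are linear, the general formula $u=u_1u^{(1)}+u_2u^{(2)}$ will then follow at once, so the whole existence question collapses to checking the two building blocks.

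For existence I would run the verification off the identities already recorded in the preliminaries. The engine is \eqref{eq:I-E}: taking $\beta=\alpha$ gives $I^{2-\alpha}_{0+}u^{(1)}(t)=t\,E_{\alpha,2}(-\lambda t^{\alpha})$, and taking $\beta=\alpha-1$ gives $I^{2-\alpha}_{0+}u^{(2)}(t)=E_{\alpha}(-\lambda t^{\alpha})$. Differentiating once via \eqref{eq:alpha-1}, and using the Mittag--Leffler derivative rule \eqref{eq:Eaa1} for $u^{(1)}$ and \eqref{eq:Ea1} for $u^{(2)}$, yields $\sideset{}{_{0+}^{\alpha-1}}{\mathop D}u^{(1)}=E_{\alpha}(-\lambda t^{\alpha})$ and $\sideset{}{_{0+}^{\alpha-1}}{\mathop D}u^{(2)}=-\lambda t^{\alpha-1}E_{\alpha,\alpha}(-\lambda t^{\alpha})$. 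A second differentiation via \eqref{eq:RL-T}, invoking \eqref{eq:Ea1} and \eqref{eq:Eaaa} respectively, gives $\sideset{}{_{0+}^{\alpha}}{\mathop D}u^{(1)}=-\lambda u^{(1)}$ and $\sideset{}{_{0+}^{\alpha}}{\mathop D}u^{(2)}=-\lambda u^{(2)}$, so both satisfy the equation. The initial data are read off as $t\to0^+$: since $\sideset{}{_{0+}^{\alpha-2}}{\mathop D}u=I^{2-\alpha}_{0+}u$ by \eqref{eq:RL-T0}, and $E_{\alpha,\beta}(0)=1/\Gamma(\beta)$ while $\alpha-1>0$ kills the powers $t^{\alpha-1}$, one obtains exactly $\sideset{}{_{0+}^{\alpha-1}}{\mathop D}u^{(1)}(0)=1$, $\sideset{}{_{0+}^{\alpha-2}}{\mathop D}u^{(1)}(0)=0$ and $\sideset{}{_{0+}^{\alpha-1}}{\mathop D}u^{(2)}(0)=0$, $\sideset{}{_{0+}^{\alpha-2}}{\mathop D}u^{(2)}(0)=1$.

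For uniqueness I would pass to the equivalent Volterra integral equation. Applying $I^{\alpha}_{0+}$ to $\sideset{}{_{0+}^{\alpha}}{\mathop D}u=-\lambda u$ and using the fundamental-theorem identity for the Riemann--Liouville derivative (which for $\alpha\in(1,2)$ recovers $u$ up to the two boundary terms carried by $\sideset{}{_{0+}^{\alpha-1}}{\mathop D}u(0)$ and $\sideset{}{_{0+}^{\alpha-2}}{\mathop D}u(0)$) turns the Cauchy problem into $u+\lambda I^{\alpha}_{0+}u=\frac{u_1}{\Gamma(\alpha)}t^{\alpha-1}+\frac{u_2}{\Gamma(\alpha-1)}t^{\alpha-2}$. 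Two solutions differ by $w$ with $w=-\lambda I^{\alpha}_{0+}w$; iterating and using the semigroup property \eqref{eq:semigroup} gives $w=(-\lambda)^{n}I^{n\alpha}_{0+}w$, and since the operator norm of $I^{n\alpha}_{0+}$ on $L^1(0,T)$ is at most $T^{n\alpha}/\Gamma(n\alpha+1)\to0$, we conclude $w\equiv0$. As a bonus, solving the same equation by successive approximations $u_m=\sum_{j=0}^{m}(-\lambda)^{j}I^{j\alpha}_{0+}u_0$, with $u_0$ the data term, reproduces the Mittag--Leffler series termwise, an independent confirmation of the explicit formula.

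The main obstacle is not the algebra but the functional-analytic setting forced by the singular power $t^{\alpha-2}$, which is only $L^1$ near the origin and fails to be $L^2$ once $\alpha\le\frac32$. I would therefore situate the uniqueness argument in the weighted class in which $I^{2-\alpha}_{0+}u$ is twice absolutely continuous, so that the fundamental-theorem identity and the reduction to the integral equation are legitimate, and I would read the two side conditions as one-sided limits $t\to0^+$. Tracking these regularity requirements, rather than any Mittag--Leffler manipulation, is where the care is needed.
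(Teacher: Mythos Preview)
Your argument is correct. Note, however, that the paper does not actually supply its own proof of this lemma: it merely cites \cite[Theorem~4.1]{KST} and moves on. So there is no in-paper proof to compare against.

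That said, your route is essentially the one taken in the cited reference, just organised differently. Kilbas--Srivastava--Trujillo first establish the equivalence with the Volterra equation $u+\lambda I^{\alpha}_{0+}u=\frac{u_1}{\Gamma(\alpha)}t^{\alpha-1}+\frac{u_2}{\Gamma(\alpha-1)}t^{\alpha-2}$ and then solve it by successive approximations, which produces both existence and uniqueness in one stroke and yields the Mittag--Leffler series as the limit. You instead split the two halves: existence is done by direct verification using the identities \eqref{eq:I-E}, \eqref{eq:Ea1}, \eqref{eq:Eaa1}, \eqref{eq:Eaaa} already recorded in Section~\ref{s:pre}, and uniqueness is handled separately via the same Volterra reduction and the contraction/iteration bound $\lVert I^{n\alpha}_{0+}\rVert_{L^1\to L^1}\le T^{n\alpha}/\Gamma(n\alpha+1)$. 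Your organisation has the advantage of being entirely self-contained within the paper's preliminaries, and your closing remark about the functional setting (working where $I^{2-\alpha}_{0+}u$ is absolutely continuous, reading the initial conditions as one-sided limits as in Remark~\ref{re:inicon}) correctly identifies the only genuine subtlety.
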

\begin{remark}\label{re:inicon}
We point out that 
the notation $\sideset{}{_{0+}^{\alpha-k}}{\mathop D}u(0)$, $k=1,2$, in \eqref{eq:CP} means that the limit is taken at almost all points of the right-sided neighborhood $(0,\varepsilon)$, $\varepsilon>0$, of $0$ as follows:
\begin{equation}
\sideset{}{_{0+}^{\alpha-1}}{\mathop D}u(0)=\lim_{t\to0+}\sideset{}{_{0+}^{\alpha-1}}{\mathop D}u(t),
\qquad
\sideset{}{_{0+}^{\alpha-2}}{\mathop D}u(0)=\lim_{t\to0+}I_{0+}^{2-\alpha}u(t),
\end{equation}
see \cite[page 136]{KST}.
\end{remark}
To conclude this section, we mention an elementary result useful for managing estimates.
\begin{lemma}\label{le:maxbeta} 
For any $\beta\in (0,1)$ the function $x\to\frac{x^\beta}{1+x}$ gains its maximum on $[0,+\infty[$ at point $\frac\beta{1-\beta}$ and the maximum value is given by
\begin{equation}\label{eq:maxbeta}
\max_{x\ge0}\frac{x^\beta}{1+x}=\beta^\beta(1-\beta)^{1-\beta},
\qquad\beta\in (0,1).
\end{equation}
\end{lemma}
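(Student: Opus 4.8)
The plan is to treat this as a straightforward one-variable calculus problem, isolating the unique interior critical point of $f(x)=\frac{x^\beta}{1+x}$ and verifying that it is a global maximum. First I would record the boundary behaviour: clearly $f(0)=0$, and since $\beta\in(0,1)$ we have $f(x)=\frac{x^\beta}{1+x}\sim x^{\beta-1}\to0$ as $x\to+\infty$. Because $f$ is continuous and strictly positive on $(0,+\infty)$, it must attain a positive maximum at some interior point, so the maximiser is found among the zeros of $f'$.

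Next I would differentiate. Writing
\begin{equation*}
f'(x)=\frac{\beta x^{\beta-1}(1+x)-x^{\beta}}{(1+x)^2}
=\frac{x^{\beta-1}\big[\beta-(1-\beta)x\big]}{(1+x)^2},
\end{equation*}
I can read off that on $(0,+\infty)$ the factors $x^{\beta-1}$ and $(1+x)^{-2}$ are strictly positive, so the sign of $f'$ is governed entirely by the linear factor $\beta-(1-\beta)x$. This vanishes precisely at $x_0=\frac{\beta}{1-\beta}$, is positive for $x<x_0$ and negative for $x>x_0$; hence $f$ increases then decreases, and $x_0$ is the unique global maximiser on $[0,+\infty)$, confirming the claimed location.

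Finally I would evaluate $f$ at $x_0$. Using $1+x_0=\frac{1}{1-\beta}$ and $x_0^{\beta}=\frac{\beta^{\beta}}{(1-\beta)^{\beta}}$, a direct substitution gives
\begin{equation*}
f(x_0)=\frac{x_0^{\beta}}{1+x_0}
=\frac{\beta^{\beta}}{(1-\beta)^{\beta}}\,(1-\beta)
=\beta^{\beta}(1-\beta)^{1-\beta},
\end{equation*}
which is exactly \eqref{eq:maxbeta}. There is no genuine obstacle here; the only point requiring a little care is the algebraic simplification of $f(x_0)$, where one must combine the two powers of $(1-\beta)$ correctly to land on the exponent $1-\beta$ rather than something else. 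The argument is entirely elementary and self-contained, requiring none of the fractional-calculus machinery developed earlier.
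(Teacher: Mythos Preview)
Your argument is correct and complete. The paper itself does not supply a proof of this lemma; it simply records it as an elementary result, so your calculus verification---boundary behaviour, the factorised derivative $f'(x)=x^{\beta-1}\big[\beta-(1-\beta)x\big]/(1+x)^2$, the sign change at $x_0=\beta/(1-\beta)$, and the direct evaluation $f(x_0)=\beta^{\beta}(1-\beta)^{1-\beta}$---is exactly the natural way to justify it.
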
 
The symbol $\sim$ between norms indicates two equivalent norms.
\section{Weak solutions of Riemann--Liouville fractional  equations }\label{s:weak}

To begin with, let us introduce the notion of weak solution.
\begin{definition} 
Let $\alpha\in(1,2)$ and $T>0$.
We define $u$ as a weak solution to the boundary value problem
\begin{equation}\label{eq:weakp}
\begin{aligned}
\sideset{}{_{0+}^{\alpha}}{\mathop D}u(t,x)
&=\Delta u(t,x)
\qquad
(t,x)\in (0,T)\times\Omega,
\\
u(t,x)&=0 
\qquad\qquad (t,x)\in (0,T)\times\partial\Omega,
\end{aligned}
\end{equation}
if  $u\in L^2(0,T;H^1_0(\Omega))$, 
$\sideset{}{_{0+}^{\alpha-1}}{\mathop D}u\in L^2(0,T;L^2(\Omega))$  
and for any $\varphi\in H^1_0(\Omega)$ one has $\int_{\Omega} \sideset{}{_{0+}^{\alpha-1}}{\mathop D}u(t,x)\varphi(x)\, dx\in W^{1,1}(0,T)$ and 
\begin{equation}\label{eq:w-int}
\frac{d}{dt}\int_{\Omega} \sideset{}{_{0+}^{\alpha-1}}{\mathop D}u(t,x)\varphi(x)\, dx
+\int_{\Omega} \nabla u(t,x)\cdot\nabla \varphi(x)\, dx =0,
\qquad \text{a.e.}\ t\in (0,T).
\end{equation}
\end{definition}
\begin{remark}\label{re:RLversusC}
We observe that if $u$ is the weak solution of \eqref{eq:weakp}, then $v=I_{0+}^{2-\alpha}u$ is the weak solution of the Caputo fractional problem
\begin{equation}\label{eq:weakpC}
\begin{aligned}
\sideset{^{C}}{_{0+}^{\alpha}}{\mathop D}v(t,x)
&=\Delta v(t,x)
\qquad
(t,x)\in (0,T)\times\Omega,
\\
v(t,x)&=0 
\qquad\qquad (t,x)\in (0,T)\times\partial\Omega,
\end{aligned}
\end{equation}
where $\sideset{^{C}}{_{0+}^{\alpha}}{\mathop D}$ denotes the Caputo derivative. Indeed,  
$
v_{tt}=\sideset{}{_{0+}^{\alpha}}{\mathop D}u=\Delta u
$
and hence 
$\sideset{^{C}}{_{0+}^{\alpha}}{\mathop D}v(t,x)=I_{0+}^{2-\alpha}v_{tt}
=\Delta I_{0+}^{2-\alpha}u=\Delta v$.

%Conversely, thanks also to \eqref{eq:semigroup}, we have
%\begin{equation*}
%\sideset{}{_{0+}^{2-\alpha}}{\mathop D}v=\frac{d}{dt}I_{0+}^{\alpha-1}v=\frac{d}{dt}I_{0+}^{\alpha-1}I_{0+}^{2-\alpha}u
%=\frac{d}{dt}I_{0+}^{1}u=u.
%\end{equation*}
\end{remark}
Keeping in mind Remark \ref{re:inicon} we establish an existence and regularity result in the case $\alpha>\frac32$.
\begin{theorem}\label{th:exist} 
Assume $\alpha>\frac32$.
\begin{enumerate}[(i)]
\item 
For $u_1\in L^2(\Omega)$ and $u_2\in H_0^1(\Omega)$  the function
\begin{multline}\label{eq:def-u0}
u(t,x)
\\
=\sum_{n=1}^\infty\big[ \langle u_1,e_n\rangle t^{\alpha-1}E_{\alpha,\alpha}(-\lambda_nt^\alpha)
+\langle u_2,e_n\rangle t^{\alpha-2} E_{\alpha,\alpha-1}(-\lambda_nt^\alpha)\big]e_n(x)
\end{multline}
is the unique weak solution of \eqref{eq:weakp} written as series and satisfying the  initial data 
\begin{equation} \label{eq:indata}
\sideset{}{_{0+}^{\alpha-1}}{\mathop D}u(0,\cdot)=u_{1},\qquad \sideset{}{_{0+}^{\alpha-2}}{\mathop D}u(0,\cdot)=u_{2}.
\end{equation}
Moreover
\begin{multline}\label{eq:def-u-alpha-1}
\sideset{}{_{0+}^{\alpha-1}}{\mathop D}u(t,x)
\\
=\sum_{n=1}^\infty\big[\langle u_1,e_n\rangle E_{\alpha}(-\lambda_nt^\alpha)
-\lambda_n\langle u_2,e_n\rangle t^{\alpha-1} E_{\alpha,\alpha}(-\lambda_nt^\alpha)\big]e_n(x),
\end{multline}
$\sideset{}{_{0+}^{\alpha-1}}{\mathop D}u\in C([0,T];D(A^{-\theta}))$,  $\theta\in\big(\frac{2-\alpha}{2\alpha},\frac12\big)$.
\item 
If $u_1\in H^1_0(\Omega)$ and $u_2\in H^2(\Omega)\cap H^1_0(\Omega)$, then $u\in L^2(0,T;H^2(\Omega))$, 
$\sideset{}{_{0+}^{\alpha}}{\mathop D}u(t,\cdot)\in L^2(0,T;L^2(\Omega))$ is given by
\begin{multline}\label{eq:def-u-alpha}
\sideset{}{_{0+}^{\alpha}}{\mathop D}u(t,x)
\\
=-\sum_{n=1}^\infty\lambda_n\big[ \langle u_1,e_n\rangle t^{\alpha-1}E_{\alpha,\alpha}(-\lambda_nt^\alpha)
+\langle u_2,e_n\rangle t^{\alpha-2} E_{\alpha,\alpha-1}(-\lambda_nt^\alpha)\big]e_n(x),
\end{multline}
and
\begin{equation}\label{eq:def-u-alpha1}
\sideset{}{_{0+}^{\alpha}}{\mathop D}u(t,x)
=\Delta u(t,x)
\quad
\text{a.e.}\ (t,x)\in (0,T)\times\Omega.
\end{equation}

\end{enumerate}

\end{theorem}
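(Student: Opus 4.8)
The plan is to solve \eqref{eq:weakp} by separation of variables, reducing it to a family of scalar Cauchy problems indexed by the eigenvalues of $-\Delta$, and then to prove that the resulting series converges in the function spaces claimed. Writing $u_n(t)=\langle u(t,\cdot),e_n\rangle$ and projecting the equation onto $e_n$ turns \eqref{eq:weakp} into the scalar problem \eqref{eq:CP} with $\lambda=\lambda_n$ and data $\langle u_1,e_n\rangle$, $\langle u_2,e_n\rangle$, whose unique solution is furnished by Lemma \ref{le:x-y}; this gives each summand of \eqref{eq:def-u0} and, simultaneously, uniqueness: any two weak solutions have the same Fourier coefficients, hence coincide. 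The bulk of the work is therefore to establish the membership and continuity statements by summing the modal estimates.

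The main analytic tool is the bound \eqref{eq:stimeE}, which gives $|t^{\alpha-1}E_{\alpha,\alpha}(-\lambda_n t^\alpha)|\le C t^{\alpha-1}/(1+\lambda_n t^\alpha)$ and $|t^{\alpha-2}E_{\alpha,\alpha-1}(-\lambda_n t^\alpha)|\le C t^{\alpha-2}/(1+\lambda_n t^\alpha)$. For the $L^2(0,T;H^1_0(\Omega))$ estimate I would compute $\int_0^T\lambda_n|u_n(t)|^2\,dt$ by the rescaling $y=\lambda_n^{1/\alpha}t$, which converts each modal integral into $\lambda_n^{\gamma}$ times a $t$-independent integral; tracking the exponents yields $\gamma=(1-\alpha)/\alpha$ for the $u_1$-term (bounded, controlled by $\|u_1\|_{L^2}^2$) and $\gamma=(3-\alpha)/\alpha$ for the $u_2$-term. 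The rescaled integral $\int_0^\infty y^{2\alpha-4}(1+y^\alpha)^{-2}\,dy$ is finite \emph{precisely} because $\alpha>\tfrac32$ (integrability at the origin needs $2\alpha-4>-1$); moreover $(3-\alpha)/\alpha<1$ for $\alpha>\tfrac32$, so this term is dominated by $\|u_2\|_{H^1_0}^2=\sum_n\lambda_n|\langle u_2,e_n\rangle|^2$. This is the step where the hypothesis $\alpha>\tfrac32$ is indispensable, and I expect it to be the main obstacle: the exponent bookkeeping must be done carefully, and the borderline integrability at $y=0$ is exactly what fixes the admissible range of $\alpha$.

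For the identity \eqref{eq:def-u-alpha-1} I would compute $I_{0+}^{2-\alpha}u_n$ modewise using \eqref{eq:I-E}: since $1-\alpha+\beta$ and $2-\alpha+\beta$ collapse nicely, one gets $I_{0+}^{2-\alpha}(\tau^{\alpha-1}E_{\alpha,\alpha})=tE_{\alpha,2}$ and $I_{0+}^{2-\alpha}(\tau^{\alpha-2}E_{\alpha,\alpha-1})=E_{\alpha}$; applying \eqref{eq:alpha-1} together with \eqref{eq:Ea1} and \eqref{eq:Eaa1} then produces exactly the two summands of \eqref{eq:def-u-alpha-1}. The same computation combined with \eqref{eq:RL-T0} gives $I_{0+}^{2-\alpha}u_n(t)=\langle u_1,e_n\rangle\,tE_{\alpha,2}(-\lambda_n t^\alpha)+\langle u_2,e_n\rangle E_{\alpha}(-\lambda_n t^\alpha)$, so that evaluating at $t=0$ (where $t^{\alpha-1}=t=0$ and $E_\alpha(0)=1$) recovers the initial data \eqref{eq:indata}. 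The continuity $\sideset{}{_{0+}^{\alpha-1}}{\mathop D}u\in C([0,T];D((-\Delta)^{-\theta}))$ follows from a Weierstrass $M$-test: using Lemma \ref{le:maxbeta} with $\beta=(\alpha-1)/\alpha$ to bound $\sup_t t^{\alpha-1}/(1+\lambda_n t^\alpha)\le C\lambda_n^{-(\alpha-1)/\alpha}$, the $u_2$-term contributes $\lambda_n^{-2\theta}\lambda_n^{2/\alpha}|\langle u_2,e_n\rangle|^2$, which is summable against $\|u_2\|_{H^1_0}^2$ exactly when $\theta>\tfrac{2-\alpha}{2\alpha}$; the restriction $\theta<\tfrac12$ guarantees $H^1_0(\Omega)\subset D((-\Delta)^\theta)$, which is what allows the pairing with test functions.

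Finally, to check the weak formulation \eqref{eq:w-int} I would test with $\varphi=e_m$, where it reduces to $\tfrac{d}{dt}\sideset{}{_{0+}^{\alpha-1}}{\mathop D}u_m+\lambda_m u_m=0$, i.e.\ \eqref{eq:RL-T} combined with the scalar equation \eqref{eq:CP}; the $W^{1,1}$-regularity of $t\mapsto\sideset{}{_{0+}^{\alpha-1}}{\mathop D}u_m(t)$ holds because its derivative $-\lambda_m u_m$ has the integrable singularity $t^{\alpha-2}$ at the origin. For general $\varphi\in H^1_0(\Omega)$ I would pass to the limit in the partial sums, using that $\sum_m\lambda_m\langle\varphi,e_m\rangle u_m$ converges in $L^2(0,T)$ to $\int_\Omega\nabla u\cdot\nabla\varphi\,dx$ by Cauchy--Schwarz and $u\in L^2(0,T;H^1_0)$, and invoking the standard criterion for differentiating a limit. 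Part (ii) is then a repetition of the same estimates with one extra factor $\lambda_n$: the rescaling gives exponents $\lambda_n^{1/\alpha}$ and $\lambda_n^{3/\alpha}$ for the two terms of \eqref{eq:def-u-alpha}, both summable against $\|u_1\|_{H^1_0}^2$ and $\|u_2\|_{D(-\Delta)}^2=\sum_n\lambda_n^2|\langle u_2,e_n\rangle|^2$ since $\tfrac3\alpha<2$ for $\alpha>\tfrac32$; recognizing that $\|(-\Delta)u(t)\|_{L^2}^2=\sum_n\lambda_n^2|u_n(t)|^2$ coincides with $\|\sideset{}{_{0+}^{\alpha}}{\mathop D}u(t)\|_{L^2}^2$ yields both $u\in L^2(0,T;H^2(\Omega))$ and the pointwise identity $\sideset{}{_{0+}^{\alpha}}{\mathop D}u=\Delta u$.
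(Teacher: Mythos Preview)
Your plan is correct and follows essentially the same architecture as the paper: separation of variables, Lemma~\ref{le:x-y} for each mode, then summation using the Mittag--Leffler bound \eqref{eq:stimeE}, the identities \eqref{eq:I-E}--\eqref{eq:Eaaa}, and Lemma~\ref{le:maxbeta}. The one genuine technical difference is your treatment of the $L^2(0,T)$ estimates: you rescale $y=\lambda_n^{1/\alpha}t$ and reduce each modal time-integral to a universal integral times a power of $\lambda_n$, whereas the paper instead proves the \emph{pointwise} bound
\[
\lVert u(t,\cdot)\rVert_{H^1_0(\Omega)}^2\le Ct^{\alpha-2}\|u_1\|_{L^2}^2+Ct^{2\alpha-4}\|u_2\|_{H^1_0}^2
\]
and only then integrates in $t$. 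Both are valid and both isolate $\alpha>\tfrac32$ as the integrability threshold at $t=0$; the rescaling is slicker for the bare $L^2(0,T)$ conclusion, while the paper's route delivers the explicit pointwise estimate above as a byproduct, which is reused elsewhere.

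Two small points to tighten. First, the weak-solution definition requires $\sideset{}{_{0+}^{\alpha-1}}{\mathop D}u\in L^2(0,T;L^2(\Omega))$, which you do not check explicitly; it follows by the same rescaling (or, as in the paper, from $\lVert\sideset{}{_{0+}^{\alpha-1}}{\mathop D}u(t)\rVert_{L^2}^2\le C\|u_1\|_{L^2}^2+Ct^{\alpha-2}\|u_2\|_{H^1_0}^2$). Second, your stated reason for $\theta<\tfrac12$ (``so that $H^1_0\subset D((-\Delta)^\theta)$ for the pairing with test functions'') is not the operative one here: in the paper the upper bound arises because Lemma~\ref{le:maxbeta} is applied with exponent $\tfrac12-\theta\in(0,1)$; your $M$-test with exponent $(\alpha-1)/\alpha$ in fact needs no such upper restriction, so you have proved slightly more than claimed on that front.
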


\begin{proof} 
(i) First, we show that 
\begin{equation*}
\sum_{n=1}^\infty\big[ \langle u_1,e_n\rangle t^{\alpha-1}E_{\alpha,\alpha}(-\lambda_nt^\alpha)
+\langle u_2,e_n\rangle t^{\alpha-2} E_{\alpha,\alpha-1}(-\lambda_nt^\alpha)\big]e_n(x)
\end{equation*}
is the unique series %representing 
giving the weak solution of \eqref{eq:weakp} with  initial data 
\eqref{eq:indata}.
For this purpose
we seek the solution in the form 
$
u(t)=\sum_{n=1}^\infty\ u_n(t)e_n
$
where the functions 
$u_n(t)=\langle u(t),e_n\rangle$ 
are unknown. We take $\varphi=e_n$ in \eqref{eq:w-int} to get
\begin{equation*}
\frac{d}{dt}\int_{\Omega} \sideset{}{_{0+}^{\alpha-1}}{\mathop D}u(t,x)e_n(x)\, dx
-\int_{\Omega} u(t,x)\Delta e_n(x)\, dx =0,
\qquad t\in [0,T],
\end{equation*}
hence by \eqref{eq:RL-T} we have
\begin{equation*}
\sideset{}{_{0+}^{\alpha}}{\mathop D}u_n+\lambda_n u_n=
\frac{d}{dt}\sideset{}{_{0+}^{\alpha-1}}{\mathop D}u_n+\lambda_n u_n=0.
\end{equation*}
So $u_n(t)$ is the solution of  problem \eqref{eq:CP} with $\lambda=\lambda_n$ where, thanks to \eqref{eq:indata}, the initial conditions are given by
\begin{equation*}
\sideset{}{_{0+}^{\alpha-1}}{\mathop D}u_n(0,\cdot)=\langle u_1,e_n\rangle,\quad
\sideset{}{_{0+}^{\alpha-2}}{\mathop D}u_n(0,\cdot)=\langle u_2,e_n\rangle.
\end{equation*}
Therefore by Lemma \ref{le:x-y} we get
\begin{equation}\label{eq:u_n}
u_n(t)=\langle u_1,e_n\rangle t^{\alpha-1}E_{\alpha,\alpha}(-\lambda_nt^\alpha)
+\langle u_2,e_n\rangle t^{\alpha-2} E_{\alpha,\alpha-1}(-\lambda_nt^\alpha),
\qquad t\ge0,
\end{equation}
hence the uniqueness follows.

Now, we show that, taking $u_1\in L^2(\Omega)$ and $u_2\in H_0^1(\Omega)$, 
$
u(t,\cdot)=\sum_{n=1}^\infty u_n(t)e_n
$,
with $u_n(t)$ given by \eqref{eq:u_n}, is a weak solution of \eqref{eq:weakp} satisfying the initial conditions \eqref{eq:indata}. First, we note that $ u(t,\cdot)\in H^1_0(\Omega)$ for $t\in (0,T]$. Indeed
\begin{multline*}
\lVert u(t,\cdot)\rVert_{H^1_0(\Omega)}^2
=\sum_{n=1}^\infty \lambda_n | u_n(t)|^2
\\
\le
2\sum_{n=1}^\infty \lambda_n \big|\langle u_1,e_n\rangle t^{\alpha-1}E_{\alpha,\alpha}(-\lambda_nt^\alpha) \big|^2
+2\sum_{n=1}^\infty \lambda_n \big| \langle u_2,e_n\rangle t^{\alpha-2} E_{\alpha,\alpha-1}(-\lambda_nt^\alpha)\big|^2
\end{multline*}
and thanks to \eqref{eq:stimeE}  
we have
\begin{align*}
 \lambda_n \big|\langle u_1,e_n\rangle t^{\alpha-1}E_{\alpha,\alpha}(-\lambda_nt^\alpha)\big|^2
& \le C t^{\alpha-2} \big|\langle u_1,e_n\rangle \big|^2\frac{\lambda_nt^\alpha}{(1+\lambda_nt^\alpha)^2}
\le C t^{\alpha-2} \big|\langle u_1,e_n\rangle \big|^2,
\\
 \lambda_n \big| \langle u_2,e_n\rangle t^{\alpha-2} E_{\alpha,\alpha-1}(-\lambda_nt^\alpha) \big|^2
 &\le Ct^{2\alpha-4}\lambda_n \big| \langle u_2,e_n\rangle  \big|^2,
\end{align*}
hence
\begin{equation}\label{eq:deltau}
\lVert u(t,\cdot)\rVert_{H^1_0(\Omega)}^2
\le C t^{\alpha-2}\|u_1\|_{L^2(\Omega)}^2+C t^{2\alpha-4} \| u_2\|_{H^1_0(\Omega)}^2.
\end{equation}
As $\alpha>\frac32$ we obtain $u\in L^2(0,T;H^1_0(\Omega))$ and
\begin{equation*}
\int_0^T\lVert u(t,\cdot)\rVert_{H^1_0(\Omega)}^2\, dt
\le C T^{\alpha-1}\|u_1\|_{L^2(\Omega)}^2+C T^{2\alpha-3} \| u_2\|_{H^1_0(\Omega)}^2.
\end{equation*}
Moreover, thanks  to \eqref{eq:I-E} we have 
\begin{equation}\label{eq:Iun}
I_{0+}^{2-\alpha}u_n(t)
=
 \langle u_1,e_n\rangle t E_{\alpha,2}(-\lambda_nt^\alpha)
+\langle u_2,e_n\rangle  E_{\alpha}(-\lambda_nt^\alpha),
\end{equation}
hence, being $u\in L^2(0,T;H^1_0(\Omega))$, by the dominated convergence theorem we get
\begin{equation}\label{eq:}
I_{0+}^{2-\alpha}u(t,\cdot)
=
\sum_{n=1}^\infty\big[ \langle u_1,e_n\rangle t E_{\alpha,2}(-\lambda_nt^\alpha)
+\langle u_2,e_n\rangle  E_{\alpha}(-\lambda_nt^\alpha)\big]e_n.
\end{equation}
The series 
$\sum_{n=1}^\infty I_{0+}^{2-\alpha}u_n(t)e_n$ 
is convergent in $L^2(\Omega)$ uniformly in $t\in [0,T]$. As a consequence we have
$I_{0+}^{2-\alpha}u\in C([0,T];L^2(\Omega))$
and
\begin{equation*}
\sideset{}{_{0+}^{\alpha-2}}{\mathop D}u(0,\cdot)=\lim_{t\to0+}I_{0+}^{2-\alpha}u(t,\cdot)
=\sum_{n=1}^\infty \langle u_2,e_n\rangle e_n=u_2,
\end{equation*}
taking into account that $E_{\alpha,2}(0)=\frac1{\Gamma(2)}$ and $E_{\alpha}(0)=\frac1{\Gamma(1)}=1$.

We have to show that $\sideset{}{_{0+}^{\alpha-1}}{\mathop D}u$ is given by \eqref{eq:def-u-alpha-1} and belongs to
$C([0,T];D(A^{-\theta}))$,
for $\theta\in\big(\frac{2-\alpha}{2\alpha},\frac12\big)$. First we note that, thanks to \eqref{eq:Iun}, \eqref{eq:Eaa1} for $k=1$ and \eqref{eq:Ea1}, we get
\begin{equation*}
\frac{d}{dt}I_{0+}^{2-\alpha}u_n(t)
=\langle u_1,e_n\rangle  E_{\alpha}(-\lambda_nt^\alpha)
-\langle u_2,e_n\rangle \lambda_n t^{\alpha-1} E_{\alpha,\alpha}(-\lambda_nt^\alpha).
\end{equation*}
Since
\begin{equation*}
\Big\|\sum_{n=1}^\infty \frac{d}{dt}I_{0+}^{2-\alpha}u_n(t)e_n\Big\|_{D(A^{-\theta})}^2
=\sum_{n=1}^\infty\lambda_n^{-2\theta}\Big|\frac{d}{dt}I_{0+}^{2-\alpha}u_n(t)\Big|^2,
\end{equation*}
due to \eqref{eq:stimeE}, \eqref{eq:maxbeta} and $0<\theta<\frac12$ we obtain
\begin{multline*}
\lambda_n^{-2\theta}\big|\langle u_2,e_n\rangle \lambda_n t^{\alpha-1}E_{\alpha,\alpha}(-\lambda_nt^\alpha)\big|^2
\\
\le Ct^{2\alpha\theta+\alpha-2} \lambda_n |\langle u_2,e_n\rangle \big|^2\bigg(\frac{(\lambda_n t^{\alpha})^{\frac12-\theta}}{1+\lambda_nt^\alpha}\bigg)^2
\le Ct^{2\alpha\theta+\alpha-2} \lambda_n |\langle u_2,e_n\rangle \big|^2.
\end{multline*}
Therefore, taking into account that $\theta>\frac{2-\alpha}{2\alpha}$, for any $t\in [0,T]$ we get
\begin{equation*}
\Big\|\sum_{n=1}^\infty \frac{d}{dt}I_{0+}^{2-\alpha}u_n(t)e_n\Big\|_{D(A^{-\theta})}^2
\le
C\|u_1\|_{L^2(\Omega)}^2+CT^{2\alpha\theta+\alpha-2} \| u_2\|_{H^1_0(\Omega)}^2.
\end{equation*}
Furthermore, the series $\sum_{n=1}^\infty \frac{d}{dt}I_{0+}^{2-\alpha}u_n(t)e_n$
is convergent in $D(A^{-\theta})$ uniformly in $t\in [0,T]$.
For that reason, the function $I_{0+}^{2-\alpha}u$ is differentiable, so  
keeping in mind \eqref{eq:alpha-1} we  get
\begin{multline*}
\sideset{}{_{0+}^{\alpha-1}}{\mathop D}u(t,\cdot)=\frac{d}{dt}I_{0+}^{2-\alpha}u(t,\cdot)
\\
=
\sum_{n=1}^\infty\big[ \langle u_1,e_n\rangle  E_{\alpha}(-\lambda_nt^\alpha)
-\langle u_2,e_n\rangle \lambda_n t^{\alpha-1} E_{\alpha,\alpha}(-\lambda_nt^\alpha)\big]e_n,
\end{multline*}
that is \eqref{eq:def-u-alpha-1}.
Since $\sideset{}{_{0+}^{\alpha-1}}{\mathop D}u\in C([0,T];D(A^{-\theta}))$, from \eqref{eq:def-u-alpha-1} it follows
\begin{equation*}
\sideset{}{_{0+}^{\alpha-1}}{\mathop D}u(0,\cdot)=\lim_{t\to0+}\sideset{}{_{0+}^{\alpha-1}}{\mathop D}u(t,\cdot)
=\sum_{n=1}^\infty \langle u_1,e_n\rangle e_n=u_1.
\end{equation*}
Using again \eqref{eq:stimeE}  
we obtain
\begin{equation*}
\lVert \sideset{}{_{0+}^{\alpha-1}}{\mathop D}u(t,\cdot)\rVert_{L^2(\Omega)}^2
\le C \|u_1\|_{L^2(\Omega)}^2+C t^{\alpha-2} \| u_2\|_{H^1_0(\Omega)}^2,
\end{equation*}
and hence 
\begin{equation*}
\int_0^T\lVert \sideset{}{_{0+}^{\alpha-1}}{\mathop D}u(t,\cdot)\rVert_{L^2(\Omega)}^2\, dt
\le C \|u_1\|_{L^2(\Omega)}^2+C T^{\alpha-1} \| u_2\|_{H^1_0(\Omega)}^2,
\end{equation*}
that is $\sideset{}{_{0+}^{\alpha-1}}{\mathop D}u\in L^2(0,T;L^2(\Omega))$. 

Afterwards, if $\varphi=\sum_{n=1}^\infty\ \langle\varphi,e_n\rangle e_n$ belongs to $H^1_0(\Omega)$, then due to \eqref{eq:def-u-alpha-1}
we have
\begin{multline*}
\int_{\Omega} \sideset{}{_{0+}^{\alpha-1}}{\mathop D}u(t,x)\varphi(x)\, dx
\\
=
\sum_{n=1}^\infty\big[\langle u_1,e_n\rangle E_{\alpha}(-\lambda_nt^\alpha)
-\lambda_n\langle u_2,e_n\rangle t^{\alpha-1} E_{\alpha,\alpha}(-\lambda_nt^\alpha)\big] 
\langle\varphi,e_n\rangle.
\end{multline*}
We observe that \eqref{eq:Ea1} and \eqref{eq:Eaaa} yield
\begin{multline}\label{eq:absc0}
\frac{d}{dt}\langle \sideset{}{_{0+}^{\alpha-1}}{\mathop D}u(t,\cdot),e_n\rangle
\\
=-\lambda_n\big[ \langle u_1,e_n\rangle t^{\alpha-1} E_{\alpha,\alpha}(-\lambda_nt^\alpha)
+ \langle u_2,e_n\rangle t^{\alpha-2} E_{\alpha,\alpha-1}(-\lambda_nt^\alpha)\big],
\end{multline}
and hence
\begin{multline}\label{eq:absc}
\sum_{n=1}^\infty \frac{d}{dt}\langle \sideset{}{_{0+}^{\alpha-1}}{\mathop D}u(t,\cdot),e_n\rangle\langle \varphi,e_n\rangle
\\
=-\sum_{n=1}^\infty\lambda_n^{\frac12}\big[ \langle u_1,e_n\rangle t^{\alpha-1} E_{\alpha,\alpha}(-\lambda_nt^\alpha)
+ \langle u_2,e_n\rangle t^{\alpha-2} E_{\alpha,\alpha-1}(-\lambda_nt^\alpha)\big] \lambda_n^{\frac12}\langle \varphi,e_n\rangle.
\end{multline}
Thanks to  \eqref{eq:stimeE} and \eqref{eq:maxbeta} we get
\begin{multline*}
\sum_{n=1}^\infty\lambda_n\big|  \langle u_1,e_n\rangle t^{\alpha-1} E_{\alpha,\alpha}(-\lambda_nt^\alpha)
+ \langle u_2,e_n\rangle t^{\alpha-2} E_{\alpha,\alpha-1}(-\lambda_nt^\alpha)\big|^2 
\\
\le
Ct^{\alpha-2}\sum_{n=1}^\infty\big| \langle u_1,e_n\rangle\big|^2\Big(\frac{ \lambda_n^{\frac12}t^{\frac\alpha2}}{ 1+\lambda_nt^\alpha}\Big)^2 
+Ct^{2\alpha-4}\sum_{n=1}^\infty\lambda_n \big|\langle u_2,e_n\rangle\big|^2%\frac1{ (1+\lambda_nt^\alpha)^2 }
\\
\le
C t^{\alpha-2}\|u_1\|_{L^2(\Omega)}^2+C t^{2\alpha-4} \| u_2\|_{H^1_0(\Omega)}^2.
\end{multline*}
Since $\alpha>\frac32$, from \eqref{eq:absc} we deduce
\begin{multline*}
\int_0^T\Big|\sum_{n=1}^\infty \frac{d}{dt}\langle \sideset{}{_{0+}^{\alpha-1}}{\mathop D}u(t,\cdot),e_n\rangle
\langle \varphi,e_n\rangle\Big|\, dt
\\
\le
C T^{\alpha-1}\|u_1\|_{L^2(\Omega)}^2+C T^{2\alpha-3} \| u_2\|_{H^1_0(\Omega)}^2+C\|\varphi\|_{H^1_0(\Omega)}^2,
\end{multline*}
that is $\sum_{n=1}^\infty \frac{d}{dt}\langle \sideset{}{_{0+}^{\alpha-1}}{\mathop D}u(t,\cdot),e_n\rangle  \langle \varphi,e_n\rangle$
belongs to $L^1(0,T)$.
Therefore, for any $t\in [0,T]$ we have
\begin{multline*}
\int_0^t\sum_{n=1}^\infty \frac{d}{ds}\langle \sideset{}{_{0+}^{\alpha-1}}{\mathop D}u(s,\cdot),e_n\rangle\langle \varphi,e_n\rangle\, ds
\\
=\sum_{n=1}^\infty\int_0^t \frac{d}{ds}\langle \sideset{}{_{0+}^{\alpha-1}}{\mathop D}u(s,\cdot),e_n\rangle
\langle \varphi,e_n\rangle\, ds
\\
=
\int_{\Omega} \sideset{}{_{0+}^{\alpha-1}}{\mathop D}u(t,x)\varphi(x)\, dx
-\int_{\Omega}u_1(x)\varphi(x)\, dx.
\end{multline*}
As a consequence the function $t\to\int_{\Omega} \sideset{}{_{0+}^{\alpha-1}}{\mathop D}u(t,x)\varphi(x)\, dx$ is absolutely continuous and,
thanks to \eqref{eq:absc}, its $L^1-$ derivative is given by
\begin{multline*}
 \frac{d}{dt}\int_{\Omega} \sideset{}{_{0+}^{\alpha-1}}{\mathop D}u(t,x)\varphi(x)\, dx
\\
=-\sum_{n=1}^\infty\lambda_n^{\frac12}\big[ \langle u_1,e_n\rangle t^{\alpha-1} E_{\alpha,\alpha}(-\lambda_nt^\alpha)
+ \langle u_2,e_n\rangle t^{\alpha-2} E_{\alpha,\alpha-1}(-\lambda_nt^\alpha)\big] \lambda_n^{\frac12}\langle \varphi,e_n\rangle.
\end{multline*}
On the other hand by \eqref{eq:def-u0} we get
\begin{multline*}
\int_{\Omega} \nabla u(t,x)\cdot\nabla \varphi(x)\, dx
\\
=\sum_{n=1}^\infty\lambda_n^{\frac12}\big[ \langle u_1,e_n\rangle t^{\alpha-1} E_{\alpha,\alpha}(-\lambda_nt^\alpha)
+ \langle u_2,e_n\rangle t^{\alpha-2} E_{\alpha,\alpha-1}(-\lambda_nt^\alpha)\big] \lambda_n^{\frac12}\langle \varphi,e_n\rangle,
\end{multline*}
and hence for a.e. $t\in (0,T)$ we have
\begin{equation*}
\frac{d}{dt}\int_{\Omega} \sideset{}{_{0+}^{\alpha-1}}{\mathop D}u(t,x)\varphi(x)\, dx
+\int_{\Omega} \nabla u(t,x)\cdot\nabla \varphi(x)\, dx =0,
\end{equation*}
that is \eqref{eq:w-int} holds. 

In conclusion, $u$ given by \eqref{eq:def-u0} is the weak solution of \eqref{eq:weakp} satisfying the  initial data 
\eqref{eq:indata}.

(ii) We assume $u_1\in H^1_0(\Omega)$ and $u_2\in H^2(\Omega)\cap H^1_0(\Omega)$. We prove the regularity results about the weak solution $u$ arguing as before and using again \eqref{eq:stimeE} and \eqref{eq:maxbeta}. First, we obtain
\begin{equation*}
\int_0^T\lVert u(t,\cdot)\rVert_{H^2(\Omega)}^2\, dt
\le C T^{\alpha-1}\|u_1\|_{H^1_0(\Omega)}^2+C T^{2\alpha-3} \|u_2\|_{H^2(\Omega)}^2,
\end{equation*}
that is $u\in L^2(0,T;H^2(\Omega))$.

To prove the regularity of $\sideset{}{_{0+}^{\alpha}}{\mathop D}u$, taking into account \eqref{eq:RL-T}, we have to differentiate formula 
\eqref{eq:def-u-alpha-1}. Indeed, from \eqref{eq:absc0} we deduce
\begin{multline}\label{eq:l2der}
\int_0^T\lVert\sum_{n=1}^\infty \frac{d}{dt}\langle \sideset{}{_{0+}^{\alpha-1}}{\mathop D}u(t,\cdot),e_n\rangle e_n\rVert_{L^2(\Omega)}^2\, dt
\\
\le
C T^{\alpha-1}\|u_1\|_{H^1_0(\Omega)}^2+C T^{2\alpha-3} \| u_2\|_{H^2(\Omega)}^2.
\end{multline}
Therefore,  the function $\sideset{}{_{0+}^{\alpha-1}}{\mathop D}u(t,\cdot)$ is absolutely continuous and, also because of \eqref{eq:absc0}, its derivative is given by
\begin{multline*}
 \frac{d}{dt}\sideset{}{_{0+}^{\alpha-1}}{\mathop D}u(t,\cdot)
\\
=-\sum_{n=1}^\infty\lambda_n\big[ \langle u_1,e_n\rangle t^{\alpha-1} E_{\alpha,\alpha}(-\lambda_nt^\alpha)
+ \langle u_2,e_n\rangle t^{\alpha-2} E_{\alpha,\alpha-1}(-\lambda_nt^\alpha)\big]e_n.
\end{multline*}
Since
$\sideset{}{_{0+}^{\alpha}}{\mathop D}u=\frac{d}{dt}\sideset{}{_{0+}^{\alpha-1}}{\mathop D}u$ the formula \eqref{eq:def-u-alpha} is true.
Finally, the inequality \eqref{eq:l2der} guarantees 
$\sideset{}{_{0+}^{\alpha}}{\mathop D}u\in L^2(0,T;L^2(\Omega))$ and \eqref{eq:def-u-alpha1} follows from \eqref{eq:w-int} and the regularity of the solution $u$.
\end{proof} 

\section{Regularity in the interpolation spaces when $\alpha>\frac32$ }\label{s:reg}

%\begin{figure}[t]
%          \includegraphics[width=.8\textwidth]{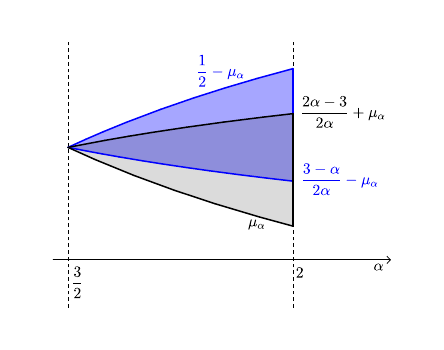}
%          \caption{Caption text.}\label{LABEL}
%\end{figure}
%\begin{figure}[t]
%          \includegraphics[width=.8\textwidth]{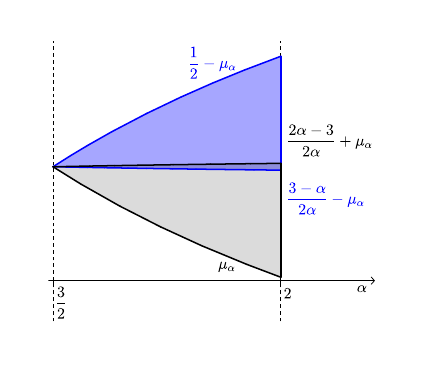}
%          \caption{Caption text.}\label{LABEL}
%\end{figure}

We establish a result on the regularity of weak solutions by assuming that the data $u_1$ and $u_2$ belong to interpolation spaces connected to the Laplace operator.
 
%intermediate between those assumed in (i) and (ii) of Theorem \ref{th:saka-y}.
%
%For further results about existence and regularity of solutions, see \cite{KY,Mai}.

\begin{theorem}\label{th:reg-l2} 
Assume $\alpha>\frac32$.
For $u_1, \nabla u_2\in D((-\Delta)^{\mu_\alpha})$, $\mu_\alpha\ge0$,  the unique weak solution $u$ to \eqref{eq:weakp}--\eqref{eq:indata} given by \eqref{eq:def-u0} satisfies the following
\begin{enumerate}[(i)]
\item 
for $\theta\in\big(\mu_\alpha,\frac{2\alpha-3}{2\alpha}+\mu_\alpha\big)$
\begin{equation}\label{eq:nabla-l2}
\lVert\nabla u\rVert_{L^2(0,T;D((-\Delta)^{\theta}))}\le C\big(\lVert u_1\rVert_{D((-\Delta)^{\mu_\alpha})}+\lVert\nabla u_2\rVert_{D((-\Delta)^{\mu_\alpha})}\big),
\end{equation}
\item 
for $\theta\in\big(\frac{3-\alpha}{2\alpha}-\mu_\alpha,\frac12-\mu_\alpha\big)$ 
\begin{equation}\label{eq:partial-l2}
\lVert\sideset{}{_{0+}^{\alpha}}{\mathop D}u\rVert_{L^2(0,T;D((-\Delta)^{-\theta}))}\le C\big(\lVert u_1\rVert_{D((-\Delta)^{\mu_\alpha})}
+\lVert\nabla u_2\rVert_{D((-\Delta)^{\mu_\alpha})}\big),
\end{equation}
\end{enumerate}
for some constant $C>0$.

\end{theorem}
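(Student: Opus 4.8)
The plan is to exploit the explicit series representations \eqref{eq:def-u0} of $u$ and \eqref{eq:def-u-alpha} of $\sideset{}{_{0+}^{\alpha}}{\mathop D}u$ together with the orthonormality of $\{e_n\}$, so that every norm in play becomes a weighted $\ell^2$-sum of the scalar coefficients
\[
u_n(t)=\langle u_1,e_n\rangle t^{\alpha-1}E_{\alpha,\alpha}(-\lambda_nt^\alpha)+\langle u_2,e_n\rangle t^{\alpha-2}E_{\alpha,\alpha-1}(-\lambda_nt^\alpha).
\]
By the spectral definition of the fractional powers one has the equivalences $\lVert\nabla v\rVert_{D((-\Delta)^\theta)}^2\sim\sum_n\lambda_n^{2\theta+1}|\langle v,e_n\rangle|^2$ and, in particular, $\lVert\nabla u_2\rVert_{D((-\Delta)^{\mu_\alpha})}^2\sim\sum_n\lambda_n^{2\mu_\alpha+1}|\langle u_2,e_n\rangle|^2$, so that both statements reduce to one family of scalar integral estimates. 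Since all summands are nonnegative, throughout I would freely interchange $\sum_n$ and $\int_0^T$ by Tonelli's theorem.

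For (i) I would bound $\lVert\nabla u(t,\cdot)\rVert_{D((-\Delta)^\theta)}^2\le 2\sum_n\lambda_n^{2\theta+1}|\langle u_1,e_n\rangle|^2 t^{2\alpha-2}|E_{\alpha,\alpha}(-\lambda_nt^\alpha)|^2+2\sum_n\lambda_n^{2\theta+1}|\langle u_2,e_n\rangle|^2 t^{2\alpha-4}|E_{\alpha,\alpha-1}(-\lambda_nt^\alpha)|^2$, apply \eqref{eq:stimeE} to each Mittag--Leffler factor, and factor out $\lambda_n^{2\mu_\alpha}|\langle u_1,e_n\rangle|^2$ and $\lambda_n^{2\mu_\alpha+1}|\langle u_2,e_n\rangle|^2$ respectively. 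The substitution $x=\lambda_nt^\alpha$ then turns the surviving factor into $t^{p}\big(\frac{x^{\gamma/2}}{1+x}\big)^2$, with $\gamma=2(\theta-\mu_\alpha)$ and $p=2\alpha-4-\alpha(2\theta-2\mu_\alpha)$ for the $u_2$-part. Lemma \ref{le:maxbeta}, applied with exponent $\gamma/2\in(0,1)$, bounds $\frac{x^{\gamma/2}}{1+x}$ uniformly in $n$ exactly when $\mu_\alpha<\theta<\mu_\alpha+1$, giving the lower endpoint $\theta>\mu_\alpha$, while integrability of $t^{p}$ on $(0,T)$, i.e. $p>-1$, gives the upper endpoint $\theta<\frac{2\alpha-3}{2\alpha}+\mu_\alpha$. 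The $u_1$-part is handled identically with $\gamma=2\theta+1-2\mu_\alpha$ and yields only $\theta<\frac{\alpha-1}{2\alpha}+\mu_\alpha$, which is weaker since $\alpha<2$; hence the $u_2$-part dictates the interval and summing the residual coefficients produces \eqref{eq:nabla-l2}.

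For (ii) the argument is identical in spirit but uses the dual norm \eqref{eq:norm-theta}. Because $\sideset{}{_{0+}^{\alpha}}{\mathop D}u(t,\cdot)=-\sum_n\lambda_n u_n(t)e_n$, relation \eqref{eq:-theta} gives $\langle\sideset{}{_{0+}^{\alpha}}{\mathop D}u(t,\cdot),e_n\rangle_{-\theta,\theta}=-\lambda_n u_n(t)$ and hence $\lVert\sideset{}{_{0+}^{\alpha}}{\mathop D}u(t,\cdot)\rVert_{D((-\Delta)^{-\theta})}^2=\sum_n\lambda_n^{2-2\theta}|u_n(t)|^2$. Running the same factorisation and substitution $x=\lambda_nt^\alpha$, the $u_2$-part becomes $t^{\,2\alpha-4-\alpha(1-2\theta-2\mu_\alpha)}\big(\frac{x^{(1-2\theta-2\mu_\alpha)/2}}{1+x}\big)^2$; now uniform boundedness of the $x$-factor via Lemma \ref{le:maxbeta} (exponent in $(0,1)$) forces the upper endpoint $\theta<\frac12-\mu_\alpha$, whereas $t$-integrability forces the lower endpoint $\theta>\frac{3-\alpha}{2\alpha}-\mu_\alpha$. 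Again the $u_1$-part only gives weaker restrictions, so one obtains \eqref{eq:partial-l2} on the stated interval.

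The routine part is the bookkeeping of the exponents; the genuinely delicate point is to check, for each of the four contributions, that the range of $\theta$ on which Lemma \ref{le:maxbeta} applies (so that the $x$-factor is bounded near both $0$ and $\infty$) overlaps the range on which the $t$-power is integrable, and that the binding constraints are precisely those coming from the two $u_2$-contributions. This is exactly what pins the two $\theta$-windows down to the stated intervals and, ultimately (see Remark \ref{re:bound}), is what will force the choice $\frac{3(2-\alpha)}{4\alpha}<\mu_\alpha<\frac14$ needed to make the two windows intersect for the trace result. A secondary point is to justify that $\sideset{}{_{0+}^{\alpha}}{\mathop D}u$ is a well-defined element of $L^2(0,T;D((-\Delta)^{-\theta}))$ with the stated coefficients, which follows from \eqref{eq:-theta} and the convergence already established in Theorem \ref{th:exist}.
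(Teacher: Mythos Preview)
Your proposal is correct and matches the paper's own proof essentially step by step: the same spectral expansion, the same splitting into the $u_1$- and $u_2$-contributions, the same use of \eqref{eq:stimeE} followed by the substitution $x=\lambda_n t^\alpha$ and Lemma~\ref{le:maxbeta}, and the same identification of the binding constraints (the $u_2$-terms) that produce the two $\theta$-windows. Your remark that $\sideset{}{_{0+}^{\alpha}}{\mathop D}u$ must be interpreted in $D((-\Delta)^{-\theta})$ via \eqref{eq:-theta} is a point the paper uses implicitly by quoting \eqref{eq:def-u-alpha}.
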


\begin{proof} 
(i) In virtue of the expression \eqref{eq:def-u0} for the solution $u$ 
we have
\begin{multline}\label{eq:nablath}
\lVert\nabla u(t,\cdot)\rVert_{D((-\Delta)^{\theta})}^2
\\
=\sum_{n=1}^\infty\lambda_n^{1+2\theta}\big| \langle u_1,e_n\rangle t^{\alpha-1}E_{\alpha,\alpha}(-\lambda_nt^\alpha)
+\langle u_2,e_n\rangle t^{\alpha-2} E_{\alpha,\alpha-1}(-\lambda_nt^\alpha)\big|^2
\\
\le
2t^{2\alpha-2}\sum_{n=1}^\infty\lambda_n^{1+2\theta}\big| \langle u_1,e_n\rangle E_{\alpha,\alpha}(-\lambda_nt^\alpha)\big|^2
\\
+2t^{2\alpha-4}\sum_{n=1}^\infty\lambda_n^{1+2\theta}\big| \langle u_2,e_n\rangle E_{\alpha,\alpha-1}(-\lambda_nt^\alpha)\big|^2.
\end{multline}
To estimate the first sum we use \eqref{eq:stimeE} and keep in mind that $u_1\in D((-\Delta)^{\mu_{\alpha}})$, so we get
\begin{equation*}
\sum_{n=1}^\infty\lambda_n^{1+2\theta}\big| \langle u_1,e_n\rangle E_{\alpha,\alpha}(-\lambda_nt^\alpha)\big|^2
\le
C\sum_{n=1}^\infty\lambda_n^{2\mu_{\alpha}}|\langle u_1,e_n\rangle|^2 \frac{\lambda_n^{1+2(\theta-\mu_{\alpha})}}{(1+\lambda_nt^\alpha)^2}.
\end{equation*}
Since
\begin{equation*}
\frac{\lambda_n^{1+2(\theta-\mu_{\alpha})}}{(1+\lambda_nt^\alpha)^2}
 =\Big(\frac{(\lambda_nt^\alpha)^{\frac12+\theta-\mu_{\alpha}}}{1+\lambda_nt^\alpha}\Big)^2 t^{-\alpha-2\alpha(\theta-\mu_{\alpha})},
\end{equation*}
assuming $\mu_{\alpha}<\theta<\frac12+\mu_{\alpha}$, we  can apply \eqref{eq:maxbeta} to have
\begin{multline}\label{eq:u1}
\sum_{n=1}^\infty\lambda_n^{1+2\theta}\big| \langle u_1,e_n\rangle E_{\alpha,\alpha}(-\lambda_nt^\alpha)\big|^2
\\
\le
Ct^{-\alpha-2\alpha(\theta-\mu_{\alpha})}\sum_{n=1}^\infty\lambda_n^{2\mu_{\alpha}}|\langle u_1,e_n\rangle|^2 
=Ct^{-\alpha-2\alpha(\theta-\mu_{\alpha})}\lVert u_1\rVert_{D((-\Delta)^{\mu_{\alpha}})}^2.
\end{multline}
To evaluate the second sum in \eqref{eq:nablath} we again use  \eqref{eq:stimeE}. Bearing in mind that 
$\nabla u_2\in D((-\Delta)^{\mu_{\alpha}})$ we have
\begin{equation*}
\sum_{n=1}^\infty\lambda_n^{1+2\theta}\big| \langle u_2,e_n\rangle E_{\alpha,\alpha-1}(-\lambda_nt^\alpha)\big|^2
\le
C\sum_{n=1}^\infty\lambda_n^{1+2\mu_{\alpha}}|\langle u_2,e_n\rangle|^2 \frac{\lambda_n^{2(\theta-\mu_{\alpha})}}{(1+\lambda_nt^\alpha)^2}.
\end{equation*}
We note that
\begin{equation*}
\frac{\lambda_n^{2(\theta-\mu_{\alpha})}}{(1+\lambda_nt^\alpha)^2}
 =\Big(\frac{(\lambda_nt^\alpha)^{\theta-\mu_{\alpha}}}{1+\lambda_nt^\alpha}\Big)^2 t^{-2\alpha(\theta-\mu_{\alpha})}.
\end{equation*}
Thanks to $\mu_{\alpha}<\theta<\frac12+\mu_{\alpha}$ and \eqref{eq:maxbeta}, we obtain
\begin{equation*}
\sum_{n=1}^\infty\lambda_n^{1+2\theta}\big| \langle u_2,e_n\rangle E_{\alpha,\alpha-1}(-\lambda_nt^\alpha)\big|^2
\le
Ct^{-2\alpha(\theta-\mu_{\alpha})}\lVert\nabla u_2\rVert_{D((-\Delta)^{\mu_{\alpha}})}^2.
\end{equation*}
Plugging the previous estimate and \eqref{eq:u1}  into \eqref{eq:nablath} yields
\begin{equation*}
\lVert\nabla u(t,\cdot)\rVert_{D((-\Delta)^{\theta})}^2
\le
Ct^{\alpha-2-2\alpha(\theta-\mu_{\alpha})}\lVert u_1\rVert_{D((-\Delta)^{\mu_{\alpha}})}^2+
Ct^{2\alpha-4-2\alpha(\theta-\mu_{\alpha})}\lVert\nabla u_2\rVert_{D((-\Delta)^{\mu_{\alpha}})}^2.
\end{equation*}
For $\alpha-1-2\alpha(\theta-\mu_{\alpha})>0$ and
$2\alpha-3-2\alpha(\theta-\mu_{\alpha})>0$ we obtain that $\nabla u$ belongs to $L^2(0,T;D((-\Delta)^{\theta}))$. Since $\alpha<2$ we have 
$\frac{2\alpha-3}{2\alpha}<\frac{\alpha-1}{2\alpha}$, and hence
we must take $\theta<\frac{2\alpha-3}{2\alpha}+\mu_{\alpha}$, which is consistent with the previous condition $\theta<\frac12+\mu_{\alpha}$.
In conclusion, \eqref{eq:nabla-l2} follows.

(ii) In proving the estimate, we follow the same lines of reasoning used above. 

Thanks to \eqref{eq:def-u-alpha} one has
\begin{multline}\label{eq:par-u-alpha}
\lVert\sideset{}{_{0+}^{\alpha}}{\mathop D}u(t,\cdot)\rVert_{D((-\Delta)^{-\theta})}^2
\\
=\sum_{n=1}^\infty\lambda_n^{2-2\theta}\big| \langle u_1,e_n\rangle t^{\alpha-1}E_{\alpha,\alpha}(-\lambda_nt^\alpha)
+\langle u_2,e_n\rangle t^{\alpha-2} E_{\alpha,\alpha-1}(-\lambda_nt^\alpha)\big|^2
\\
\le
2t^{2\alpha-2}\sum_{n=1}^\infty\lambda_n^{2-2\theta}\big| \langle u_1,e_n\rangle E_{\alpha,\alpha}(-\lambda_nt^\alpha)\big|^2
\\
+2 t^{2\alpha-4}\sum_{n=1}^\infty\lambda_n^{2-2\theta}\big| \langle u_2,e_n\rangle E_{\alpha,\alpha-1}(-\lambda_nt^\alpha)\big|^2.
\end{multline}
The first sum can be evaluated as follows 
\begin{equation*}
\sum_{n=1}^\infty\lambda_n^{2-2\theta}\big| \langle u_1,e_n\rangle E_{\alpha,\alpha}(-\lambda_nt^\alpha)\big|^2
\le
C\sum_{n=1}^\infty\lambda_n^{2\mu_{\alpha}}\big| \langle u_1,e_n\rangle \big|^2
\frac{\lambda_n^{2-2(\theta+\mu_{\alpha})}}{(1+\lambda_nt^\alpha)^2}.
\end{equation*}
Since
\begin{equation*}
 \frac{\lambda_n^{2-2(\theta+\mu_{\alpha})}}{(1+\lambda_nt^\alpha)^2}
 =\Big(\frac{(\lambda_nt^\alpha)^{1-(\theta+\mu_{\alpha})}}{1+\lambda_nt^\alpha}\Big)^2 t^{2\alpha(\theta+\mu_{\alpha})-2\alpha},
\end{equation*}
we have
\begin{equation}\label{eq:par-u-alpha1}
\sum_{n=1}^\infty\lambda_n^{2-2\theta}\big| \langle u_1,e_n\rangle E_{\alpha,\alpha}(-\lambda_nt^\alpha)\big|^2
\le
C t^{2\alpha(\theta+\mu_{\alpha})-2\alpha}\lVert u_1\rVert_{D((-\Delta)^{\mu_{\alpha}})}^2.
\end{equation}
Regarding the second sum in \eqref{eq:par-u-alpha}, we note that
\begin{equation*}
\sum_{n=1}^\infty\lambda_n^{2-2\theta}\big|\langle u_2,e_n\rangle  E_{\alpha,\alpha-1}(-\lambda_nt^\alpha)\big|^2
\le
C\sum_{n=1}^\infty\lambda_n^{1+2\mu_{\alpha}}|\langle u_2,e_n\rangle|^2 
\frac{\lambda_n^{1-2(\theta+\mu_{\alpha})}}{(1+\lambda_nt^\alpha)^2}
\end{equation*}
and
\begin{equation*}
 \frac{\lambda_n^{1-2(\theta+\mu_{\alpha})}}{(1+\lambda_nt^\alpha)^2}
 =\Big(\frac{(\lambda_nt^\alpha)^{\frac12-(\theta+\mu_{\alpha})}}{1+\lambda_nt^\alpha}\Big)^2 t^{2\alpha(\theta+\mu_{\alpha})-\alpha}.
\end{equation*}
Taking $\theta<\frac12-\mu_{\alpha}$ we get
\begin{equation*}
\sum_{n=1}^\infty\lambda_n^{2-2\theta}\big| \langle u_2,e_n\rangle E_{\alpha,\alpha-1}(-\lambda_nt^\alpha)\big|^2
\le C t^{2\alpha(\theta+\mu_{\alpha})-\alpha}\lVert\nabla u_2\rVert_{D((-\Delta)^{\mu_{\alpha}})}^2.
\end{equation*}
By inserting the previous estimate and \eqref{eq:par-u-alpha1} into \eqref{eq:par-u-alpha}, we obtain
\begin{multline*}
\lVert\sideset{}{_{0+}^{\alpha}}{\mathop D}u(t,\cdot)\rVert_{D((-\Delta)^{-\theta})}^2
\\
\le
C t^{2\alpha(\theta+\mu_{\alpha})-2}\lVert u_1\rVert_{D((-\Delta)^{\mu_{\alpha}})}^2
+C t^{2\alpha(\theta+\mu_{\alpha})+\alpha-4}\lVert\nabla u_2\rVert_{D((-\Delta)^{\mu_{\alpha}})}^2.
\end{multline*}
So, $\sideset{}{_{0+}^{\alpha}}{\mathop D}u(t,\cdot)\in L^2(0,T;D((-\Delta)^{-\theta}))$ if $2\alpha(\theta+\mu_{\alpha})-1>0$ and
$2\alpha(\theta+\mu_{\alpha})+\alpha-3>0$, that is $\theta>\frac{3-\alpha}{2\alpha}-\mu_{\alpha}$. In conclusion, \eqref{eq:partial-l2} is proved.
\end{proof} 

\begin{figure}[t]


          \subfloat[$\xi=0.7$]{\includegraphics[width=7cm]{RLfractional2}}
\hskip-30pt
          \subfloat[$\xi=0.97$]{\includegraphics[width=6.5cm]{RLfractional4}}
          \caption{Non-empty intersections}\label{figure}
        \end{figure}

\begin{remark}\label{re:bound}

%\begin{enumerate}
%\item[(a)] 

In Theorem \ref{th:reg-l2}
we can take $\mu_\alpha=0$. In that case, we have
$u_1$, $\nabla u_2\in L^2(\Omega)$,
\eqref{eq:nabla-l2} holds for 
$\theta\in\big(0,\frac{2\alpha-3}{2\alpha}\big)$ while \eqref{eq:partial-l2} holds for 
$\theta\in\big(\frac{3-\alpha}{2\alpha},\frac12\big)$. 
We note that the intervals $\big(0,\frac{2\alpha-3}{2\alpha}\big)$ and $\big(\frac{3-\alpha}{2\alpha},\frac12\big)$ have no common points, because $\frac{2\alpha-3}{2\alpha}<\frac{3-\alpha}{2\alpha}$ thanks to $\alpha<2$.

To prove the trace regularity result, it is essential that the estimates
\eqref{eq:nabla-l2} and \eqref{eq:partial-l2} hold for the same value of $\theta$. 
For this reason we must choose a suitable value of $\mu_\alpha>0$ such that the intersection of the intervals
$\big(\mu_\alpha,\frac{2\alpha-3}{2\alpha}+\mu_\alpha\big)$ and $\big(\frac{3-\alpha}{2\alpha}-\mu_\alpha,\frac12-\mu_\alpha\big)$ is not empty. Consequently, we have to impose $\frac{3-\alpha}{2\alpha}-\mu_\alpha<\frac{2\alpha-3}{2\alpha}+\mu_\alpha$ and $\mu_\alpha<\frac12-\mu_\alpha$, hence we obtain the following condition $\frac{3(2-\alpha)}{4\alpha}<\mu_\alpha<\frac14$. 

To provide further clarification, we consider the expression $\mu_\alpha=\frac{3(2-\alpha)\xi}{4\alpha}+\frac{1-\xi}4$ with $\xi\in(0,1)$. As shown in Figure \ref{figure}, we can observe the non-empty intersection of the intervals for two distinct values of $\xi$.
\end{remark}

\section{Trace regularity results}\label{s:hidreg}

In this section we follow arguments similar to those implemented in \cite{K} for wave equations.
First, we single out some technical results that we will use later.

\begin{lemma}\label{le:tech0} 
For $u\in H^2(\Omega)$ and a vector field $h:\overline{\Omega}\to\mathbb{R}^N$ of class $C^1$ one has
\begin{multline}\label{eq:triangleuF00}
\int_\Omega\triangle u\, h\cdot\nabla u\, dx
=
\int_{\partial\Omega}\Big[\partial_\nu u\, h\cdot\nabla u
-\frac12h\cdot\nu |\nabla u|^2\Big]\, d\sigma
\\
-\sum_{i,j=1}^N\int_\Omega
\partial_i  h_j\partial_i u\partial_j u\, dx
+\frac12\int_{\Omega}
\sum_{j=1}^N \partial_jh_j\ |\nabla u|^2\, dx.
\end{multline}
\end{lemma}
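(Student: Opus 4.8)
The plan is to prove the identity \eqref{eq:triangleuF00} by a direct computation based on integration by parts (the divergence theorem), applied to the manifestly integrable quantity $\triangle u\, h\cdot\nabla u$. This is a classical Rellich--Pohozaev type multiplier identity: one multiplies the Laplacian by the radial-like field $h\cdot\nabla u$ and integrates, converting bulk terms into boundary terms.

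Let me sketch the computation. Writing $\triangle u=\sum_{i}\partial_{ii}u$ and $h\cdot\nabla u=\sum_{j}h_j\partial_j u$, I would start from
\begin{equation*}
\int_\Omega\triangle u\,h\cdot\nabla u\,dx=\sum_{i,j=1}^N\int_\Omega\partial_{ii}u\,h_j\partial_j u\,dx,
\end{equation*}
and integrate by parts in the $x_i$ variable. This produces a boundary term $\int_{\partial\Omega}\partial_\nu u\,h\cdot\nabla u\,d\sigma$ (after summing over $i$, the normal derivative $\partial_\nu u=\nabla u\cdot\nu$ appears) minus the bulk term $\sum_{i,j}\int_\Omega\partial_i u\,\partial_i(h_j\partial_j u)\,dx$. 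Expanding the latter by the product rule gives two pieces: $\sum_{i,j}\int_\Omega\partial_i h_j\,\partial_i u\,\partial_j u\,dx$ and $\sum_{i,j}\int_\Omega h_j\,\partial_i u\,\partial_{ij}u\,dx$. The first piece is exactly the negative of the second term on the right-hand side of \eqref{eq:triangleuF00}.

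The heart of the argument is recognizing the remaining term as a divergence. Note that $\partial_i u\,\partial_{ij}u=\tfrac12\partial_j(\partial_i u)^2$, so summing over $i$ gives $\sum_{i}h_j\,\partial_i u\,\partial_{ij}u=\tfrac12 h_j\,\partial_j|\nabla u|^2$. Summing over $j$ and integrating by parts once more in the $x_j$ variable yields a boundary term $\tfrac12\int_{\partial\Omega}(h\cdot\nu)\,|\nabla u|^2\,d\sigma$ minus the bulk term $\tfrac12\int_\Omega(\sum_j\partial_j h_j)\,|\nabla u|^2\,dx=\tfrac12\int_\Omega(\operatorname{div}h)\,|\nabla u|^2\,dx$. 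Collecting the two boundary contributions and the two bulk contributions, with signs, reproduces precisely the four terms of \eqref{eq:triangleuF00}.

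The main obstacle, and the only genuinely delicate point, is justifying the integrations by parts at the regularity level $u\in H^2(\Omega)$ with $h\in C^1(\overline\Omega)$. For $u\in C^\infty(\overline\Omega)$ the computation above is elementary, so the clean strategy is to prove the identity first for smooth $u$ and then pass to the limit by density of $C^\infty(\overline\Omega)$ in $H^2(\Omega)$ (valid since $\partial\Omega$ is $C^2$). Both sides depend continuously on $u$ in the $H^2$ norm: the bulk terms involve products of first derivatives controlled in $L^2$, while the boundary terms are controlled by the trace theorem, which sends $H^2(\Omega)$ continuously into $H^{3/2}(\partial\Omega)\times H^{1/2}(\partial\Omega)$ for $(u|_{\partial\Omega},\partial_\nu u)$, so $\nabla u|_{\partial\Omega}\in L^2(\partial\Omega)^N$ and all boundary integrands are at least in $L^1(\partial\Omega)$. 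This continuity lets both sides pass to the $H^2$-limit, completing the proof.
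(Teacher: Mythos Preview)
Your proof is correct and follows essentially the same route as the paper: integrate by parts once to produce the boundary term $\int_{\partial\Omega}\partial_\nu u\,h\cdot\nabla u\,d\sigma$ and the bulk term $-\int_\Omega\nabla u\cdot\nabla(h\cdot\nabla u)\,dx$, expand the latter via the product rule, recognise $\sum_{i}h_j\,\partial_i u\,\partial_{ij}u=\tfrac12 h_j\,\partial_j|\nabla u|^2$, and integrate by parts a second time. The paper carries this out directly without spelling out the $C^\infty(\overline\Omega)$-density argument you include, but that extra care is harmless and the computations match term for term.
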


\begin{proof} 
We integrate by parts to get
\begin{equation}\label{eq:triangleu00}
\int_\Omega\triangle u\, h\cdot\nabla u\, dx
=
\int_{\partial\Omega}
\partial_\nu u\, h\cdot\nabla u\, d\sigma
-\int_\Omega\nabla u
\cdot\nabla \big( h\cdot\nabla u\big)\, dx.
\end{equation}
Since
\begin{multline*}
\int_\Omega\nabla u\cdot\nabla \big( h\cdot\nabla u \big)\, dx
=\sum_{i,j=1}^N\int_\Omega
\partial_i u\, \partial_i ( h_j\partial_j u)\, dx
\\
=\sum_{i,j=1}^N\int_\Omega\partial_i u\, \partial_i  h_j\partial_j u\, dx
+\sum_{i,j=1}^N\int_\Omega h_j\, \partial_i u\partial_j ( \partial_i u)\, dx,
\end{multline*}
we evaluate the last term on the right-hand side again by an integration  by parts, so we obtain
\begin{align*}
\sum_{i,j=1}^N\int_\Omega
h_j\, \partial_i u \partial_j ( \partial_i u)\, dx
=&\frac12\sum_{j=1}^N\int_\Omega
h_j\, \partial_j \Big( \sum_{i=1}^N(\partial_i u)^2\Big)\, dx
\\
=&\frac12\int_{\partial\Omega}
h\cdot\nu |\nabla u|^2\, d\sigma
-\frac12\int_{\Omega}
\sum_{j=1}^N \partial_jh_j\ |\nabla u|^2\, dx.
\end{align*}
Finally, combining the previous two identities with \eqref{eq:triangleu00} yields \eqref{eq:triangleuF00}.
\end{proof}

Let us  keep in mind the symbol $\langle \cdot,\cdot\rangle_{-\theta,\theta}$, $\theta\in(0,1)$, denoting the duality
defined in \eqref{eq:duality}.

\begin{lemma}\label{le:tech} 
Assume $\alpha\in(1,2)$ and the weak solution $u$ of
\begin{equation}\label{eq:stato}
\sideset{}{_{0+}^{\alpha}}{\mathop D}u(t,x) =\triangle u (t,x)
\quad
\text{in}
\ \
(0,T)\times\Omega
\end{equation}
belonging to $L^2(0,T;H^2(\Omega)\cap H^1_0(\Omega))$ with $\sideset{}{_{0+}^{\alpha}}{\mathop D}u\in L^2(0,T;L^2(\Omega))$.
Then, for a vector field $h:\overline{\Omega}\to\mathbb{R}^N$ of class $C^1$ and $\theta\in(0,1)$ the following identity holds true$:$
\begin{multline}\label{eq:identity}
\int_{\partial\Omega}
\Big[\partial_\nu u(t,\sigma)\,  h\cdot \nabla u(t,\sigma)
-\frac12 h\cdot\nu \big|\nabla u(t,\sigma)\big|^2\Big]\, d\sigma
\\
=\langle \sideset{}{_{0+}^{\alpha}}{\mathop D}u(t,\cdot),h\cdot \nabla u(t,\cdot)\rangle_{-\theta,\theta}
+\sum_{i,j=1}^N\int_\Omega\partial_i  h_j (x)\partial_i u(t,x)\partial_j u(t,x)\, dx
\\
-\frac12\int_{\Omega}\sum_{j=1}^N \partial_jh_j\ | \nabla u(t,x)|^2\, dx, \quad\text{a.e.} \ t\in (0,T).
\end{multline}

\end{lemma}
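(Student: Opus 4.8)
The plan is to derive \eqref{eq:identity} by applying, at almost every fixed time $t$, the purely spatial Rellich-type identity \eqref{eq:triangleuF00} of Lemma \ref{le:tech0}, then using the equation \eqref{eq:stato} to trade the Laplacian for the fractional derivative, and finally invoking the duality relation \eqref{eq:-theta} to recast the remaining volume integral as the bracket $\langle\cdot,\cdot\rangle_{-\theta,\theta}$. Concretely, I would first fix $t\in(0,T)$ outside a suitable null set so that simultaneously $u(t,\cdot)\in H^2(\Omega)\cap H^1_0(\Omega)$ and $\sideset{}{_{0+}^{\alpha}}{\mathop D}u(t,\cdot)\in L^2(\Omega)$; both hold for a.e.\ $t$ by the hypotheses $u\in L^2(0,T;H^2(\Omega)\cap H^1_0(\Omega))$ and $\sideset{}{_{0+}^{\alpha}}{\mathop D}u\in L^2(0,T;L^2(\Omega))$. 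For such $t$ the function $u(t,\cdot)$ is admissible in Lemma \ref{le:tech0}, and rearranging \eqref{eq:triangleuF00} to isolate the boundary term produces exactly the two volume integrals $\sum_{i,j}\int_\Omega\partial_i h_j\,\partial_i u\,\partial_j u\,dx$ and $-\tfrac12\int_\Omega\sum_j\partial_j h_j\,|\nabla u|^2\,dx$ appearing on the right-hand side of \eqref{eq:identity}, together with the term $\int_\Omega\triangle u\,(h\cdot\nabla u)\,dx$.

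Since $u$ is a weak solution of \eqref{eq:stato}, the relation $\triangle u(t,\cdot)=\sideset{}{_{0+}^{\alpha}}{\mathop D}u(t,\cdot)$ holds as an identity in $L^2(\Omega)$ for a.e.\ $t$, so the leftover term becomes $\int_\Omega \sideset{}{_{0+}^{\alpha}}{\mathop D}u\,(h\cdot\nabla u)\,dx$. Because $u(t,\cdot)\in H^2(\Omega)$ and $h\in C^1(\overline\Omega)$, each product $h_j\partial_j u$ belongs to $H^1(\Omega)$, whence $h\cdot\nabla u(t,\cdot)\in H^1(\Omega)$; granting the membership $h\cdot\nabla u(t,\cdot)\in D((-\Delta)^\theta)$ and recalling that $\sideset{}{_{0+}^{\alpha}}{\mathop D}u(t,\cdot)\in L^2(\Omega)\subset D((-\Delta)^{-\theta})$, the relation \eqref{eq:-theta} gives
\[
\int_\Omega \sideset{}{_{0+}^{\alpha}}{\mathop D}u\,(h\cdot\nabla u)\,dx
=\langle \sideset{}{_{0+}^{\alpha}}{\mathop D}u(t,\cdot),\,h\cdot\nabla u(t,\cdot)\rangle_{-\theta,\theta}.
\]
Substituting this back into the rearranged Rellich identity yields \eqref{eq:identity} for a.e.\ $t\in(0,T)$.

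The integrations by parts are routine, being wholly absorbed into Lemma \ref{le:tech0}, and the substitution of the equation is immediate. The step I expect to require the most care is the final identification: one must check that $h\cdot\nabla u(t,\cdot)$ genuinely lies in $D((-\Delta)^\theta)$, so that \eqref{eq:-theta} collapses the duality bracket to the ordinary $L^2$ scalar product. The inclusion $H^1(\Omega)\subset D((-\Delta)^\theta)$ is unproblematic for $\theta$ below the threshold at which a vanishing-trace constraint is forced (note that $h\cdot\nabla u$ restricts to $(h\cdot\nu)\,\partial_\nu u$ on $\partial\Omega$, which need not vanish); for the remaining values one reads the bracket through \eqref{eq:-theta} simply as the $L^2$ pairing, which is well defined since $\sideset{}{_{0+}^{\alpha}}{\mathop D}u(t,\cdot)\in L^2(\Omega)$. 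Writing the volume term as $\langle\cdot,\cdot\rangle_{-\theta,\theta}$ rather than as an $L^2$ product is precisely what makes \eqref{eq:identity} reusable in the proof of the trace estimate of Theorem \ref{th:hidalpha}, where $\sideset{}{_{0+}^{\alpha}}{\mathop D}u$ will only be controlled in $D((-\Delta)^{-\theta})$.
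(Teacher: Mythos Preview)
Your proposal is correct and mirrors the paper's own argument almost exactly: the paper fixes $\theta\in(0,1)$, pairs the equation \eqref{eq:stato} against $h\cdot\nabla u(t,\cdot)$ via the duality bracket, uses \eqref{eq:-theta} to reduce $\langle \triangle u,\,h\cdot\nabla u\rangle_{-\theta,\theta}$ to the $L^2$ integral $\int_\Omega\triangle u\,(h\cdot\nabla u)\,dx$, and then invokes Lemma~\ref{le:tech0}; you simply run the same three moves in the reverse order. Your explicit caution about whether $h\cdot\nabla u(t,\cdot)\in D((-\Delta)^\theta)$ for all $\theta\in(0,1)$ is a point the paper passes over silently, and your observation that in any case both sides coincide with the $L^2$ pairing under the present regularity assumptions is exactly the justification needed.
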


\begin{proof} 
Fix $\theta\in(0,1)$, 
by means of the duality
$\langle \cdot,\cdot\rangle_{-\theta,\theta}$, see \eqref{eq:duality}, for a.e. $t\in (0,T)$
we evaluate each term of the equation \eqref{eq:stato} in
$
h\cdot \nabla u(t,\cdot)   
$
to get
\begin{equation*}
\langle D_{0+}^{\alpha}u(t,\cdot),h\cdot\nabla u(t,\cdot)\rangle_{-\theta,\theta}
=\langle \triangle u(t,\cdot),h\cdot\nabla u(t,\cdot)\rangle_{-\theta,\theta}.
\end{equation*}
Thanks to the regularity of the weak solution $u$ and \eqref{eq:-theta} the term on the right-hand side of the previuos equation can be written as a scalar product in $L^2(\Omega)$, so we have
\begin{equation}\label{eq:identity1}
\langle D_{0+}^{\alpha}u(t,\cdot),h\cdot\nabla u(t,\cdot)\rangle_{-\theta,\theta}
=\int_\Omega \triangle u(t,x)\, h\cdot\nabla u(t,x)\, dx.
\end{equation}
By evaluating the term
$
\int_\Omega \triangle u(t,x)\, h\cdot\nabla u(t,x)\, dx
$
with Lemma \ref{le:tech0}, 
from \eqref{eq:triangleuF00} we deduce
\begin{multline*}
\int_{\partial\Omega}
\Big[\partial_\nu u(t,\sigma)\,  h\cdot \nabla u(t,\sigma)
-\frac12 h\cdot\nu \big|  \nabla u(t,\sigma)\big|^2\Big]\, d\sigma
\\
=\int_\Omega \triangle u(t,x)
h\cdot\nabla u(t,x)\ dx
+\sum_{i,j=1}^N\int_\Omega
\partial_i  h_j (x)\partial_i u(t,x)\partial_j u(t,x)\, dx
\\
-\frac12\int_{\Omega}
\sum_{j=1}^N \partial_jh_j\ | \nabla u(t,x)|^2\, dx.
\end{multline*}
We conclude from \eqref{eq:identity1} that \eqref{eq:identity} is proved.
\end{proof}  
\begin{theorem}\label{th:hidstr} 
Let  $u_1\in H^1_0(\Omega)$, $u_2\in H^2(\Omega)\cap H^1_0(\Omega)$ and $u$ the weak solution of
\begin{equation}\label{eq:cauchy1}
\begin{aligned}
\sideset{}{_{0+}^{\alpha}}{\mathop D}u(t,x)
&=\Delta u(t,x)
\qquad
(t,x)\in (0,T)\times\Omega,
\\
u(t,x)&=0 
\qquad\qquad (t,x)\in (0,T)\times\partial\Omega,
\end{aligned}
\end{equation}
satisfying the  initial data 
\begin{equation} \label{eq:indata1}
\sideset{}{_{0+}^{\alpha-1}}{\mathop D}u(0,\cdot)=u_{1},\qquad \sideset{}{_{0+}^{\alpha-2}}{\mathop D}u(0,\cdot)=u_{2}.
\end{equation}
Then, for any $T>0$ there exists a constant $C=C(T)$ such that for any $\mu_\alpha\in\big(\frac{3(2-\alpha)}{4\alpha},\frac14\big)$ $u$ satisfies the inequality
\begin{equation}\label{eq:hidden-alpha}
\int_0^T\int_{\partial\Omega} \big|\partial_\nu u\big|^2\, d\sigma dt
\le C\big(\lVert u_1\rVert_{D((-\Delta)^{\mu_\alpha})}
+\lVert\nabla u_2\rVert_{D((-\Delta)^{\mu_\alpha})}\big).
\end{equation}
\end{theorem}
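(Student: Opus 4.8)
The plan is to run the Rellich--Pohozaev multiplier argument encoded in Lemma \ref{le:tech}, with the vector field $h$ chosen to agree with the outward unit normal on $\partial\Omega$. Since $u_1\in H^1_0(\Omega)$ and $u_2\in H^2(\Omega)\cap H^1_0(\Omega)$, Theorem \ref{th:exist}(ii) guarantees $u\in L^2(0,T;H^2(\Omega)\cap H^1_0(\Omega))$ and $D_{0+}^{\alpha}u\in L^2(0,T;L^2(\Omega))$, so the hypotheses of Lemma \ref{le:tech} hold. As $\partial\Omega$ is of class $C^2$, I would fix $h\in C^1(\overline\Omega;\mathbb{R}^N)$ with $h=\nu$ on $\partial\Omega$ (for instance extending the gradient of the signed distance to the boundary).

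Next I would exploit the Dirichlet condition to simplify the boundary term. Because $u(t,\cdot)\in H^1_0(\Omega)$, the tangential gradient of $u$ vanishes on $\partial\Omega$, so there $\nabla u=(\partial_\nu u)\nu$; with $h=\nu$ this gives $h\cdot\nabla u=\partial_\nu u$, $h\cdot\nu=1$ and $|\nabla u|^2=|\partial_\nu u|^2$, whence the boundary integrand in \eqref{eq:identity} reduces to $\frac12|\partial_\nu u|^2$. Integrating \eqref{eq:identity} over $(0,T)$ then yields, for any fixed $\theta\in(0,1)$,
\[
\frac12\int_0^T\!\!\int_{\partial\Omega}|\partial_\nu u|^2\,d\sigma\,dt
=\int_0^T\langle D_{0+}^{\alpha}u,\,h\cdot\nabla u\rangle_{-\theta,\theta}\,dt+\int_0^T R(t)\,dt,
\]
where $R(t)$ collects the two volume terms $\sum_{i,j}\int_\Omega\partial_i h_j\,\partial_i u\,\partial_j u\,dx$ and $-\frac12\int_\Omega(\operatorname{div} h)\,|\nabla u|^2\,dx$. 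Since $h\in C^1$, these are bounded by $C_h\int_0^T\|\nabla u(t)\|_{L^2(\Omega)}^2\,dt$, which is controlled by the right-hand side of \eqref{eq:nabla-l2} (for $\theta>0$ the embedding $D((-\Delta)^\theta)\subset L^2(\Omega)$ dominates the $L^2$-norm of $\nabla u$ by its $D((-\Delta)^\theta)$-norm).

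The crux is the duality term. Using the $D((-\Delta)^{-\theta})$--$D((-\Delta)^\theta)$ pairing and Cauchy--Schwarz in $t$, I would estimate
\[
\int_0^T\big|\langle D_{0+}^{\alpha}u,\,h\cdot\nabla u\rangle_{-\theta,\theta}\big|\,dt
\le\|D_{0+}^{\alpha}u\|_{L^2(0,T;D((-\Delta)^{-\theta}))}\,\|h\cdot\nabla u\|_{L^2(0,T;D((-\Delta)^\theta))}.
\]
To pass from $h\cdot\nabla u$ to $\nabla u$ I need multiplication by the $C^1$ field $h$ to act boundedly on $D((-\Delta)^\theta)$; this forces the restriction $\theta\in(0,\frac14)$, where $D((-\Delta)^\theta)$ is the full fractional space $H^{2\theta}(\Omega)$ with no boundary constraint (since $2\theta<\frac12$) and $\|h\cdot\nabla u\|_{D((-\Delta)^\theta)}\le C_h\|\nabla u\|_{D((-\Delta)^\theta)}$. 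It then remains to select a single exponent $\theta$ for which both \eqref{eq:nabla-l2} and \eqref{eq:partial-l2} apply: by Remark \ref{re:bound} the hypothesis $\mu_\alpha\in(\frac{3(2-\alpha)}{4\alpha},\frac14)$ makes the intersection of the two admissible $\theta$-intervals non-empty, and one checks its left endpoint $\max(\mu_\alpha,\frac{3-\alpha}{2\alpha}-\mu_\alpha)$ is strictly below $\frac14$, so an admissible $\theta<\frac14$ does exist. Feeding \eqref{eq:nabla-l2} and \eqref{eq:partial-l2} into the last display bounds the product, and hence the trace integral, by a constant multiple of $(\|u_1\|_{D((-\Delta)^{\mu_\alpha})}+\|\nabla u_2\|_{D((-\Delta)^{\mu_\alpha})})^2$, yielding \eqref{eq:hidden-alpha}.

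I expect the main obstacle to be this coordination of exponents: securing one $\theta$ that simultaneously lies in the common domain of validity of the two regularity estimates of Theorem \ref{th:reg-l2} \emph{and} stays below $\frac14$, so that the $C^1$ multiplier $h$ is bounded on the unconstrained space $D((-\Delta)^\theta)=H^{2\theta}(\Omega)$. Verifying that this window is non-empty --- using $\mu_\alpha>\frac{3(2-\alpha)}{4\alpha}$ to drive the left endpoint below $\frac14$ --- is the delicate quantitative point, everything else being the standard multiplier identity together with Cauchy--Schwarz.
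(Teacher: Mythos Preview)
Your proposal is correct and follows essentially the same route as the paper: apply Lemma~\ref{le:tech} with a $C^1$ vector field $h$ equal to $\nu$ on $\partial\Omega$, reduce the boundary term to $\tfrac12\int_{\partial\Omega}|\partial_\nu u|^2$ via $\nabla u=(\partial_\nu u)\nu$ on $\partial\Omega$, integrate in time, and then feed in the two estimates of Theorem~\ref{th:reg-l2} for a common $\theta$ supplied by Remark~\ref{re:bound}. You are in fact more careful than the paper on one point it leaves implicit---the boundedness of the multiplier $v\mapsto h\cdot v$ on $D((-\Delta)^\theta)$, which you secure by taking $\theta<\tfrac14$ so that $D((-\Delta)^\theta)=H^{2\theta}(\Omega)$ carries no boundary constraint; your check that the admissible window for $\theta$ meets $(0,\tfrac14)$ is correct.
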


\begin{proof} 
First, we note that by Theorem \ref{th:exist}--(ii) the weak solution $u$ to \eqref{eq:weakp} given by \eqref{eq:def-u0} belongs to $L^2(0,T;H^2(\Omega)\cap H^1_0(\Omega))$ and $\sideset{}{_{0+}^{\alpha}}{\mathop D}u(t,\cdot)\in L^2(0,T;L^2(\Omega))$. In particular, the normal derivative $\partial_\nu u$ is well defined.

%To prove the statement we use Theorem \ref{th:R-Lop}. Indeed, for $H=L^2(\partial\Omega)$ and $\beta\in(0,1)$
%we can apply \eqref{eq:R-Lop}  to get
%\begin{equation*}
%\|\partial_\nu u\|_{L^2(0,T;L^2(\partial\Omega))}\sim
%\|I^{\beta}(\partial_\nu u)\|_{H^{\beta}(0,T;L^2(\partial\Omega))}\,,
%\end{equation*}
%whence the inequality \eqref{eq:hidden-alpha} is equivalent to
%\begin{equation}\label{eq:hidden-alpha1}
%\|I^{\beta}(\partial_\nu u)\|_{H^{\beta}(0,T;L^2(\partial\Omega))}
%\le C\big(\|\nabla u_0\|_{L^2(\Omega)}+\|u_1\|_{L^2(\Omega)}\big)
%\,.
%\end{equation}
%Therefore, our goal is to prove \eqref{eq:hidden-alpha1}. Thanks to
%\eqref{eq:defHs}, we have to eva\-luate
%$\big\|I^{\beta}(\partial_\nu u)\big\|_{L^2(0,T;L^2(\partial\Omega))}$ and
%$\big[ I^{\beta}(\partial_\nu u) \big]_{H^{\beta}(0,T;L^2(\partial\Omega))}$.
We employ the  identity in Lemma \ref{le:tech} with a suitable choice of the vector field $h$. 
Indeed, we take a vector field $h\in C^1(\overline{\Omega};\mathbb{R}^N)$ satisfying the condition
\begin{equation*}
h=\nu
\qquad
\text{on}\quad\partial\Omega
\end{equation*}
(see e.g. \cite {K} for the existence of such vector field $h$). First we consider the identity \eqref{eq:identity}.
Since
\begin{equation*}
\nabla u=(\partial_\nu u)\nu
\quad
\text{on}
\quad (0,T)\times\partial\Omega\,,
\end{equation*}
(see e.g. \cite[Lemma 2.1]{MM} for a detailed proof)
the left-hand side of \eqref{eq:identity} becomes
\begin{equation*}
\frac12\int_{\partial\Omega} \big|\partial_\nu u(t,\sigma)\big|^2\, d\sigma.
\end{equation*}
If we
integrate  \eqref{eq:identity} over $[0,T]$, then we obtain
\begin{multline*}
\int_0^T\int_{\partial\Omega}
\big|\partial_\nu u(t,\sigma)\big|^2\, d\sigma dt
=2\int_0^T\langle \sideset{}{_{0+}^{\alpha}}{\mathop D}u(t,\cdot),h\cdot \nabla u(t,\cdot)\rangle_{-\theta,\theta}\ dt
\\
+2\sum_{i,j=1}^N\int_0^T\int_\Omega
\partial_i  h_j (x)\partial_i u(t,x)\partial_j u(t,x)\, dx dt
-\int_0^T\int_{\Omega}
\sum_{j=1}^N \partial_jh_j\ | \nabla u(t,x)|^2\, dxdt.
\end{multline*}
Since $h\in C^1(\overline{\Omega};\mathbb{R}^N)$ from the above inequality we get
\begin{equation}\label{eq:norml2beta}
\big\|\partial_\nu u\big\|_{L^2(0,T;L^2(\partial\Omega))}
\le C\Big(\big\|\sideset{}{_{0+}^{\alpha}}{\mathop D}u\big\|_{L^2(0,T;D(A^{-\theta}))}
+\big\|\nabla u\big\|_{L^2(0,T;D(A^{\theta}))}\Big),
\end{equation}
for some constant $C>0$.
To apply Theorem \ref{th:reg-l2}, since  $\mu_\alpha\in\big(\frac{3(2-\alpha)}{4\alpha},\frac14\big)$ 
we can choose 
\begin{equation*}
\theta\in \Big(\mu_\alpha,\frac{2\alpha-3}{2\alpha}+\mu_\alpha\Big)\cap\Big(\frac{3-\alpha}{2\alpha}-\mu_\alpha,\frac12-\mu_\alpha\Big)
\end{equation*}
see Remark \ref{re:bound}. So, we  get
$\sideset{}{_{0+}^{\alpha}}{\mathop D}u\in L^2(0,T;D(A^{-\theta}))$ and
$\nabla u\in L^2(0,T;D(A^{\theta}))$,
%Thanks again to Theo\-rem \ref{th:R-Lop} we have
%\begin{equation*}
%\begin{split}
%\big\|I^{\beta}(\partial_t^{\alpha}u)\big\|_{H^{\beta}(0,T;D(A^{-\theta}))}
%&\sim
%\|\partial_t^{\alpha}u\|_{L^2(0,T;D(A^{-\theta}))}
%\,,
%\\
%\big\|I^{\beta}(\nabla u)\big\|_{H^{\beta}(0,T;D(A^{\theta}))}
%&\sim
%\|\nabla u\|_{L^2(0,T;D(A^{\theta}))}
%\,,
%\end{split}
%\end{equation*}
hence from \eqref{eq:norml2beta}, \eqref{eq:partial-l2} and \eqref{eq:nabla-l2}  we deduce
\eqref{eq:hidden-alpha}. 
\end{proof} 

\begin{theorem}\label{th:hidalpha}  
Let  $u_1, \nabla u_2\in D((-\Delta)^{\mu_\alpha})$ with $\frac{3(2-\alpha)}{4\alpha}<\mu_\alpha<\frac14$. If $u$ is the weak solution of
\eqref{eq:cauchy1}-\eqref{eq:indata1},
then we define the normal derivative $\partial_\nu u$ of $u$ such that  for any $T>0$ we have
\begin{equation}\label{eq:hidden-alpha0}
\int_0^T\int_{\partial\Omega} \big|\partial_\nu u\big|^2\, d\sigma dt
\le C\big(\lVert u_1\rVert_{D((-\Delta)^{\mu_\alpha})}
+\lVert\nabla u_2\rVert_{D((-\Delta)^{\mu_\alpha})}\big),
\end{equation}
for some constant $C=C(T)$ independent of the initial data.
\end{theorem}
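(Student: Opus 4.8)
The plan is to deduce Theorem \ref{th:hidalpha} from Theorem \ref{th:hidstr} by a \emph{density and bounded linear extension} argument. The essential point is that under the weaker hypotheses $u_1,\nabla u_2\in D((-\Delta)^{\mu_\alpha})$ with $\mu_\alpha<\frac14<\frac12$ the solution $u$ need no longer belong to $L^2(0,T;H^2(\Omega))$, so the trace $\partial_\nu u$ has no classical meaning and must be \emph{defined} as a limit. The main obstacle is precisely to endow $\partial_\nu u$ with a well-posed meaning; the uniform estimate \eqref{eq:hidden-alpha} of Theorem \ref{th:hidstr}, combined with the linearity of the problem, is what makes this possible.

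First I would approximate the data by their spectral truncations. Setting $u_1^{(m)}=\sum_{n=1}^m\langle u_1,e_n\rangle e_n$ and $u_2^{(m)}=\sum_{n=1}^m\langle u_2,e_n\rangle e_n$, each pair consists of finite combinations of eigenfunctions, hence $u_1^{(m)}\in H^1_0(\Omega)$ and $u_2^{(m)}\in H^2(\Omega)\cap H^1_0(\Omega)$, so Theorem \ref{th:hidstr} applies to the associated solution $u^{(m)}$ (the truncation of \eqref{eq:def-u0}) and $\partial_\nu u^{(m)}$ is classically defined. Recalling that $\lVert v\rVert_{D((-\Delta)^{\mu_\alpha})}^2=\sum_n\lambda_n^{2\mu_\alpha}|\langle v,e_n\rangle|^2$ and $\lVert\nabla w\rVert_{D((-\Delta)^{\mu_\alpha})}^2=\sum_n\lambda_n^{1+2\mu_\alpha}|\langle w,e_n\rangle|^2$, the tails of these series tend to zero, so $u_1^{(m)}\to u_1$ and $\nabla u_2^{(m)}\to\nabla u_2$ in $D((-\Delta)^{\mu_\alpha})$.

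Next I would exploit linearity. The difference $u^{(m)}-u^{(k)}$ is the weak solution associated with the data $u_1^{(m)}-u_1^{(k)}$ and $u_2^{(m)}-u_2^{(k)}$, which are again finite combinations of eigenfunctions, so Theorem \ref{th:hidstr} bounds $\lVert\partial_\nu u^{(m)}-\partial_\nu u^{(k)}\rVert_{L^2(0,T;L^2(\partial\Omega))}$ in terms of $\lVert u_1^{(m)}-u_1^{(k)}\rVert_{D((-\Delta)^{\mu_\alpha})}$ and $\lVert\nabla(u_2^{(m)}-u_2^{(k)})\rVert_{D((-\Delta)^{\mu_\alpha})}$. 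Since both data sequences are Cauchy in $D((-\Delta)^{\mu_\alpha})$, the traces $\partial_\nu u^{(m)}$ form a Cauchy sequence in the complete space $L^2(0,T;L^2(\partial\Omega))$, and I define $\partial_\nu u$ as their limit. Applying the same estimate to the difference of any two approximating sequences shows the limit is independent of the approximation, so $\partial_\nu u$ is well defined and depends linearly on the data.

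Finally I would pass to the limit. By Theorem \ref{th:reg-l2}(i) applied to the tails $u-u^{(m)}$, whose data are the discarded spectral components, one has $\nabla u^{(m)}\to\nabla u$ in $L^2(0,T;D((-\Delta)^{\theta}))$ with $\theta>\mu_\alpha>0$; since this norm dominates the $H^1_0$ norm on the high modes, it follows that $u^{(m)}\to u$ in $L^2(0,T;H^1_0(\Omega))$, so the limit $\partial_\nu u$ is genuinely attached to the weak solution $u$ of \eqref{eq:cauchy1}--\eqref{eq:indata1}. Writing \eqref{eq:hidden-alpha} for each $u^{(m)}$ and letting $m\to\infty$, using $\partial_\nu u^{(m)}\to\partial_\nu u$ on the left and the convergence of the data norms on the right, yields \eqref{eq:hidden-alpha0} with the constant $C=C(T)$ furnished by Theorem \ref{th:hidstr}, in particular independent of the initial data.
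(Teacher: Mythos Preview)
Your proposal is correct and follows essentially the same approach as the paper: both argue by density and bounded linear extension from Theorem \ref{th:hidstr}. The paper's proof is terser, simply invoking the existence of a unique continuous linear extension $\mathcal{L}:D((-\Delta)^{\mu_\alpha})\times D((-\Delta)^{\frac12+\mu_\alpha})\to L^2_{loc}((0,\infty);L^2(\partial\Omega))$ of the map $(u_1,u_2)\mapsto\partial_\nu u$, whereas you spell out the construction explicitly via spectral truncations, the Cauchy argument, and the verification that $u^{(m)}\to u$ in $L^2(0,T;H^1_0(\Omega))$; these additional details are sound and make the argument more self-contained.
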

\begin{proof} 
For $u_1\in H^1_0(\Omega)$ and $u_2\in H^2(\Omega)\cap H^1_0(\Omega)$, denoted by $u$ the weak solution of problem
\eqref{eq:cauchy1}-\eqref{eq:indata1},  thanks to Theorem \ref{th:hidstr}, the inequality
\eqref{eq:hidden-alpha0} holds for any $T>0$.
By density there exists a unique continuous linear map
\begin{equation*}
{\mathcal L}:D((-\Delta)^{\mu_\alpha})\times D((-\Delta)^{\frac12+\mu_\alpha})\to L^2_{loc}((0,\infty);L^2(\partial\Omega))
\end{equation*}
such that
\begin{equation*}
{\mathcal L}(u_1,u_2)=\partial_\nu u
\qquad\forall(u_1,u_2)\in H^1_0(\Omega)\times\big(H^2(\Omega)\cap H^1_0(\Omega)\big) 
\end{equation*}
and
\begin{equation*}
\int_0^T\int_{\partial\Omega} \big|{\mathcal L}(u_1,u_2)\big|^2d\sigma dt
\le  C\big(\lVert u_1\rVert_{D((-\Delta)^{\mu_\alpha})}
+\lVert\nabla u_2\rVert_{D((-\Delta)^{\mu_\alpha})}\big),
\end{equation*}
for any $u_1, \nabla u_2\in D((-\Delta)^{\mu_\alpha})$.
Consequently, we adopt the standard notation $\partial_\nu u$ in place of  ${\mathcal L}(u_1,u_2)$, which leads to \eqref{eq:hidden-alpha0} holding. 
\end{proof} 
\begin{remark}\label{re:bound1}
We observe that the assumption 
$u_1, \nabla u_2\in D((-\Delta)^{\mu_\alpha})$ with $\frac{3(2-\alpha)}{4\alpha}<\mu_\alpha<\frac14$ is weaker than assuming 
$u_1\in H^1_0(\Omega)$ and $u_2\in H^2(\Omega)\cap H^1_0(\Omega)$, due to
\begin{equation*}
H^1_0(\Omega)=D((-\Delta)^{\frac12})\subset D((-\Delta)^{\frac14})\subset D((-\Delta)^{\mu_\alpha}).
\end{equation*}
\end{remark} 

\section{Further properties}

In this section we establish the decay rate of so-called energy for the Riemann-Liou\-ville problem. 

Next, we investigate the duality existing between 
the Caputo problem and the Riemann-Liouville problem following an approach similar to that of J.-L. Lions \cite{Lio3} for the case of the wave equation. 

%Finally we prove the time-analiticity of the solution under the assumption on the initial conditions ?????

\subsection{Decay rate of energy}

The analysis of the decay rate in $L^2$-norm started from the case $\alpha\in(0,1)$ for the problem involving the Caputo fractional derivative instead of the Riemann-Liouville fractional derivative, see \cite{VZ} and also \cite{KRY}. 
In \cite{SY} the authors established the decay rate in $L^2$-norm when $\alpha\in(1,2)$.
This result was improved in \cite{Ya}, in the case $\alpha\in(0,1)\cup(1,2)$. 
%for the solution $u$ of the problem
%\begin{equation}\label{eq:}
%\left \{\begin{array}{l}\displaystyle
%\sideset{^{C}}{_{0+}^{\alpha}}{\mathop D}u(t,x)
%=\Delta u(t,x)
%\qquad (t,x)\in (0,T)\times\Omega,
%\\
%\\
%u(t,x)=0, \quad (t,x)\in (0,T)\times\partial\Omega,
%\\
%u(0,x)=a(x)\quad u_{t}(0,x)=b(x)\qquad  x\in\Omega,
%\end{array}\right .
%\end{equation}
%because the author proved the estimate
%\begin{equation}
%\|u(t,\cdot)\|_{H^2(\Omega)}\le \frac{C}{t^\alpha}\|a\|+\frac{C}{t^{\alpha-1}}\|b\|,
%\end{equation}

Acting similarly to \cite{Ya}, we prove the decay rate for the weak solution of the problem
\begin{equation}\label{eq:decay}
\begin{aligned}
\sideset{}{_{0+}^{\alpha}}{\mathop D}u(t,x)
&=\Delta u(t,x)
\qquad
(t,x)\in (0,T)\times\Omega,
\\
u(t,x)&=0 
\qquad\qquad (t,x)\in (0,T)\times\partial\Omega,
\\
\sideset{}{_{0+}^{\alpha-1}}{\mathop D}u(0,\cdot)&=u_{1},\qquad \sideset{}{_{0+}^{\alpha-2}}{\mathop D}u(0,\cdot)=u_{2}.
\end{aligned}
\end{equation}
\begin{theorem}\label{th:}
 The weak solution 
 \begin{multline}\label{eq:}
u(t,x)
=\sum_{n=1}^\infty\big[ \langle u_1,e_n\rangle t^{\alpha-1}E_{\alpha,\alpha}(-\lambda_nt^\alpha)
+\langle u_2,e_n\rangle t^{\alpha-2} E_{\alpha,\alpha-1}(-\lambda_nt^\alpha)\big]e_n(x)
\end{multline}
of \eqref{eq:decay} satisfies
\begin{equation}\label{eq:decay1}
\|u(t,\cdot)\|_{H^2(\Omega)}\le \frac{C}{t}\|u_1\|+\frac{C}{t^{2}}\|u_2\|,
\qquad\text{for any}\ t>0.
\end{equation}
\end{theorem}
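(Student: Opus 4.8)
The plan is to convert the $H^2$-bound into a weighted $\ell^2$-estimate on the Fourier coefficients of $u(t,\cdot)$ and then to read off the powers of $1/t$ directly from the series, using only the uniform decay \eqref{eq:stimeE} of the Mittag--Leffler functions. Since $D(-\Delta)=H^2(\Omega)\cap H^1_0(\Omega)$ and, by Theorem~\ref{th:exist}, the solution satisfies $u(t,\cdot)\in H^1_0(\Omega)$ for every $t>0$, elliptic regularity on the $C^2$ domain $\Omega$ gives the norm equivalence $\lVert u(t,\cdot)\rVert_{H^2(\Omega)}\sim\lVert(-\Delta)u(t,\cdot)\rVert_{L^2(\Omega)}$. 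Applying the spectral formula \eqref{eq:norm-frac} with $\theta=1$, I would write
\begin{equation*}
\lVert u(t,\cdot)\rVert_{H^2(\Omega)}^2\sim\sum_{n=1}^\infty\lambda_n^2\,|u_n(t)|^2,
\end{equation*}
where $u_n(t)$ is the coefficient given by \eqref{eq:u_n}.

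Next I would split the sum into the $u_1$-contribution and the $u_2$-contribution and estimate each summand via \eqref{eq:stimeE}, i.e.\ $|E_{\alpha,\beta}(-\mu)|\le C/(1+\mu)$ with $\mu=\lambda_n t^\alpha$. The decisive point is a purely algebraic cancellation: writing $\mu_n=\lambda_n t^\alpha$, one has
\begin{equation*}
\lambda_n^2\,t^{2\alpha-2}\,\frac{1}{(1+\lambda_n t^\alpha)^2}=\frac{1}{t^2}\Big(\frac{\mu_n}{1+\mu_n}\Big)^2\le\frac{1}{t^2},\qquad
\lambda_n^2\,t^{2\alpha-4}\,\frac{1}{(1+\lambda_n t^\alpha)^2}=\frac{1}{t^4}\Big(\frac{\mu_n}{1+\mu_n}\Big)^2\le\frac{1}{t^4},
\end{equation*}
both bounds being uniform in $n$ because $\mu_n/(1+\mu_n)\le1$. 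Hence the $u_1$-sum is dominated by $\frac{C}{t^2}\sum_n|\langle u_1,e_n\rangle|^2=\frac{C}{t^2}\lVert u_1\rVert^2$ and the $u_2$-sum by $\frac{C}{t^4}\lVert u_2\rVert^2$; adding the two and using $\sqrt{a+b}\le\sqrt a+\sqrt b$ delivers \eqref{eq:decay1}.

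I do not anticipate a real obstacle: the decay rates $1/t$ and $1/t^2$ emerge directly from the explicit powers $t^{\alpha-1}$ and $t^{\alpha-2}$ in the series combined with the $1/(1+\mu)$ decay of the Mittag--Leffler functions. The only steps needing a little care are, first, justifying the $H^2$/$D(-\Delta)$ norm equivalence together with the convergence of the series in $H^2(\Omega)$ for each fixed $t>0$, which is exactly the smoothing phenomenon: data $u_1,u_2\in L^2(\Omega)$ already force $u(t,\cdot)\in H^2(\Omega)$ for $t>0$ and make the right-hand side of \eqref{eq:decay1} finite; and, second, observing that here one does \emph{not} invoke Lemma~\ref{le:maxbeta} but only the trivial bound $\mu/(1+\mu)\le1$, which corresponds to the endpoint $\beta=1$ excluded from that lemma.
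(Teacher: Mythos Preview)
Your proposal is correct and follows essentially the same route as the paper: expand $\lVert u(t,\cdot)\rVert_{H^2(\Omega)}^2$ spectrally as $\sum_n\lambda_n^2|u_n(t)|^2$, apply the Mittag--Leffler bound \eqref{eq:stimeE}, and use the elementary inequality $\mu/(1+\mu)\le1$ with $\mu=\lambda_nt^\alpha$ to extract the factors $t^{-2}$ and $t^{-4}$. The paper's computation is slightly more compressed (it writes the $H^2$ norm directly as the spectral sum without pausing on the norm equivalence), but the argument is the same line for line.
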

\begin{proof} 
Since
\begin{equation*}
\lVert u(t,\cdot)\rVert_{H^2(\Omega)}^2
=\sum_{n=1}^\infty \lambda_n^2 \big| \langle u_1,e_n\rangle t^{\alpha-1}E_{\alpha,\alpha}(-\lambda_nt^\alpha)
+\langle u_2,e_n\rangle t^{\alpha-2} E_{\alpha,\alpha-1}(-\lambda_nt^\alpha)\big|^2,
\end{equation*}
thanks to \eqref{eq:stimeE} we have for $t>0$
\begin{equation*}
\lVert u(t,\cdot)\rVert_{H^2(\Omega)}^2
\le
Ct^{-2}\sum_{n=1}^\infty  \big| \langle u_1,e_n\rangle\big|^2 \frac{\lambda_n^2t^{2\alpha}}{1+\lambda_n^2t^{2\alpha}}
+Ct^{-4}\sum_{n=1}^\infty  \big|\langle u_2,e_n\rangle\big|^2 \frac{\lambda_n^2t^{2\alpha}} {1+\lambda_n^2t^{2\alpha}}.
\end{equation*}
Therefore
\begin{equation*}
\|u(t,\cdot)\|_{H^2(\Omega)}^2\le \frac{C}{t^2}\|u_1\|^2+\frac{C}{t^{4}}\|u_2\|^2,
\qquad\text{for any}\ t>0,
\end{equation*}
that is \eqref{eq:decay1}.
\end{proof} 
We observe that the estimate \eqref{eq:decay1} does not depend on $\alpha$.

\subsection{Duality}
%{Duality between Caputo and Riemann--Liouville problems}\label{s:hum}
%\section{Time fractional exact controllability}\label{se:HUM}

Let $T>0$ and $\Omega$ be a bounded domain of class $C^2$ in $\mathbb{R}^N$, $N\ge1$, with boundary $\partial\Omega$.
We consider the Caputo fractional diffusion-wave equation with $\alpha\in(1,2)$:
\begin{equation}\label{eq:problem-u}
\sideset{^{C}}{_{0+}^{\alpha}}{\mathop D}u(t,x)
=\Delta u(t,x)
\qquad
(t,x)\in (0,T)\times\Omega,
\end{equation}
with null initial conditions 
\begin{equation}
u(0,x)=u_{t}(0,x)=0\qquad  x\in\Omega,
\end{equation} 
and boundary conditions, 
\begin{equation}\label{eq:bound-u1}
u(t,x)=g(t,x) \quad (t,x)\in (0,T)\times\partial\Omega.
\end{equation}
We are interested in determining the dual system of the fractional differential problem \eqref{eq:problem-u}-\eqref{eq:bound-u1} also due to the uniqueness properties of the dual system. %allow us to establish the results of the control theory for the primal problem.

For the meaning and study of duality in the context of control theory, we refer to the seminal works \cite{Lio3,R} and references therein. 

In relation to the fractional differential case, one of the main difficulties is determining the function spaces in which to set the problem and an appropriate meaning for the solution. 

%We will prove that the adjoint system of problem is the following. 
%Given
%$(w_{0},w_{1})\in (C^\infty_c(\Omega))^2$,
We introduce the {\it adjoint} system of \eqref{eq:problem-u}-\eqref{eq:bound-u1} as
\begin{equation}\label{eq:adjoint}
\begin{cases}
\displaystyle 
\sideset{}{_{T-}^{\alpha}}{\mathop D}w(t,x)
=\Delta w(t,x)
\qquad
(t,x)\in (0,T)\times\Omega,
\\
\\
w(t,x)=0 
\qquad (t,x)\in (0,T)\times\partial\Omega,
\end{cases}
\end{equation}
with  final data 
\begin{equation}\label{eq:final}
\sideset{}{_{T-}^{\alpha-1}}{\mathop D}w(T,\cdot)=w_{1},\qquad \sideset{}{_{T-}^{\alpha-2}}{\mathop D}w(T,\cdot)=w_{2}.
\end{equation} 
Set $v(t,x)=w(T-t,x)$, for any $\beta>0$ we have 
\begin{multline*}
\int_t^T (s-t)^{\beta-1}w(s,x)\, ds
\\
=\int_t^T (s-t)^{\beta-1}v(T-s,x)\, ds
=\int_0^{T-t} (T-t-\sigma)^{\beta-1}v(\sigma,x)\, d\sigma.
\end{multline*}
Therefore
\begin{equation*}
I_{T-}^{\beta}w(t,x)=I_{0+}^{\beta}v(T-t,x)
\qquad\beta>0,
\end{equation*}
and hence
\begin{equation*}
\begin{aligned}
\sideset{}{_{T-}^{\alpha}}{\mathop D}w(t,x)&=\sideset{}{_{0+}^{\alpha}}{\mathop D}v(T-t,x),
\\
\sideset{}{_{T-}^{\alpha-1}}{\mathop D}w(T,\cdot)&=\sideset{}{_{0+}^{\alpha-1}}{\mathop D}v(0,\cdot),
\quad
\sideset{}{_{T-}^{\alpha-2}}{\mathop D}w(T,\cdot)=\sideset{}{_{0+}^{\alpha-2}}{\mathop D}v(0,\cdot).
\end{aligned}
\end{equation*}
The above considerations ensure that the backward problem \eqref{eq:adjoint}-\eqref{eq:final} is equivalent to a forward problem of the type \eqref{eq:cauchy1}-\eqref{eq:indata1}. 
Thanks to Theorem \ref{th:hidalpha}, by appropriately choosing the regularity of the final data $w_1$ and $w_2$, the solution  $w$ of \eqref{eq:adjoint}--\eqref{eq:final} will be sufficiently smooth to guarantee that $\partial_\nu w$ is well-defined, allowing us to take $g=\partial_\nu w$ in \eqref{eq:bound-u1}.
%The above problem is well-posed by means of a spectral representation and the Mittag-Leffler functions, see e.g. \cite{Kilbas2006,HP,IK,LPJ,HAR}. 
The  nonhomogeneous problem \eqref{eq:problem-u}-\eqref{eq:bound-u1} can be written in the form
\begin{equation}\label{eq:phi}
\left \{\begin{array}{l}\displaystyle
\sideset{^{C}}{_{0+}^{\alpha}}{\mathop D}u(t,x)
=\Delta u(t,x)
\qquad (t,x)\in (0,T)\times\Omega,
\\
\\
u(0,x)=u_{t}(0,x)=0\qquad  x\in\Omega,
\\
\displaystyle
u(t,x)=\partial_\nu w(t,x), \quad (t,x)\in (0,T)\times\partial\Omega.
\end{array}\right .
\end{equation} 
%As in the non integral case, it can be proved  that  problem \eqref{eq:phi} admits a unique solution
%$u$.

In a similar way to \cite{Lio3} we show duality by introducing a suitable operator.
\begin{theorem}
Let $\Psi$ be a linear operator defined as 
\begin{equation}\label{eq:psi0}
\Psi(w_{1},w_{2})=\big(u(T,\cdot),-u_t(T,\cdot)\big),
%\qquad
%(w_{1},w_{2})\in (C^\infty_c(\Omega))^2,
\end{equation}
for $w_1, \nabla w_2\in D((-\Delta)^{\mu_\alpha})$ with $\frac{3(2-\alpha)}{4\alpha}<\mu_\alpha<\frac14$, where $u$ and $w$ are the solution of \eqref{eq:phi} and \eqref{eq:adjoint}-\eqref{eq:final} respectively.

Then
\begin{equation}\label{eq:psi}
\langle\Psi(w_{1},w_{2}),(w_{1},w_{2})\rangle
=\int_0^T\int_{\partial\Omega}\big|\partial_{\nu}w(t,x)\big|^2 \, dx dt. 
\end{equation}
\end{theorem}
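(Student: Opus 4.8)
The plan is to follow the transposition (duality) method of J.-L.~Lions: I would test the equation for $u$ against $w$ and, symmetrically, the equation for $w$ against $u$, integrate over $(0,T)\times\Omega$, and then integrate by parts in both the space and the fractional-time variables so that all interior terms cancel, leaving a surface contribution on $(0,T)\times\partial\Omega$ equal to $\int_0^T\int_{\partial\Omega}|\partial_\nu w|^2$ together with two time-endpoint contributions at $t=T$ that reassemble $\langle\Psi(w_1,w_2),(w_1,w_2)\rangle$. Concretely, since $u$ solves \eqref{eq:phi} and $w$ solves \eqref{eq:adjoint}, the quantity
\[
\int_0^T\!\!\int_\Omega\Big[\big(\sideset{^{C}}{_{0+}^{\alpha}}{\mathop D}u-\Delta u\big)w-u\big(\sideset{}{_{T-}^{\alpha}}{\mathop D}w-\Delta w\big)\Big]\,dx\,dt
\]
vanishes identically, and the whole argument consists in rewriting this ``$0=0$'' through integration by parts.

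For the spatial part I would apply Green's second identity to $\int_\Omega(\Delta u\,w-u\,\Delta w)\,dx$. Using the adjoint boundary condition $w=0$ on $\partial\Omega$ together with the boundary datum $u=\partial_\nu w$ on $\partial\Omega$ prescribed in \eqref{eq:phi}, the two boundary integrals collapse to $-\int_{\partial\Omega}|\partial_\nu w|^2\,d\sigma$, and integrating in $t$ produces the surface term $\int_0^T\int_{\partial\Omega}|\partial_\nu w|^2\,d\sigma\,dt$. For the temporal part I would first use the fractional integration-by-parts formula \eqref{eq:change+-} with exponent $2-\alpha$ to move $I_{0+}^{2-\alpha}$ off $u_{tt}$ and onto $w$, turning it into $I_{T-}^{2-\alpha}w$; two ordinary integrations by parts in $t$ then transfer both time derivatives from $u$ onto $I_{T-}^{2-\alpha}w$, producing $\frac{d^2}{dt^2}I_{T-}^{2-\alpha}w=\sideset{}{_{T-}^{\alpha}}{\mathop D}w$ in the interior and the endpoint bracket $\big[u_t\,I_{T-}^{2-\alpha}w-u\,\frac{d}{dt}I_{T-}^{2-\alpha}w\big]_0^T$. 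The contributions at $t=0$ vanish by the null initial data $u(0,\cdot)=u_t(0,\cdot)=0$, while the contributions at $t=T$ are read off from the final data through the right-sided analogues of \eqref{eq:RL-T0} and \eqref{eq:alpha-1}, namely $I_{T-}^{2-\alpha}w(T,\cdot)=\sideset{}{_{T-}^{\alpha-2}}{\mathop D}w(T,\cdot)=w_2$ and $\frac{d}{dt}I_{T-}^{2-\alpha}w(T,\cdot)=-\sideset{}{_{T-}^{\alpha-1}}{\mathop D}w(T,\cdot)=-w_1$, both of which follow from the substitution $v(t,x)=w(T-t,x)$ already used above.

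Combining the spatial and temporal computations and cancelling the common interior integral $\int_0^T\int_\Omega u\,\sideset{}{_{T-}^{\alpha}}{\mathop D}w$, one is left with a linear combination of $\langle u(T,\cdot),w_1\rangle$ and $\langle u_t(T,\cdot),w_2\rangle$ balanced against $\int_0^T\int_{\partial\Omega}|\partial_\nu w|^2$. The sign convention built into the definition $\Psi(w_1,w_2)=(u(T,\cdot),-u_t(T,\cdot))$ is precisely what is needed so that this combination coincides with $\langle\Psi(w_1,w_2),(w_1,w_2)\rangle$, yielding \eqref{eq:psi}.

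The delicate point, and where the real rigour lies, is that the formal calculation above presupposes more regularity than is available: by Theorem~\ref{th:hidalpha} the boundary datum $g=\partial_\nu w$ belongs only to $L^2((0,T)\times\partial\Omega)$, so the solution $u$ of the nonhomogeneous Caputo problem \eqref{eq:phi} is not of class $H^2$ and $\partial_\nu u$ has no classical meaning; moreover both $u$ and $w$ are singular at the time endpoints (the $t^{\alpha-2}$ and $(T-t)^{\alpha-2}$ behaviour), so the two ordinary integrations by parts and the evaluation of the endpoint brackets are not immediately legitimate. I would resolve this exactly in the spirit of the transposition method: first establish \eqref{eq:psi} for smooth final data $(w_1,w_2)$, for which $w$ is a strong solution with a genuine $L^2$ normal trace and $u$ is regular enough for the Green/trace manipulations (of the type in Lemma~\ref{le:tech0}) to be valid, and then extend to all $w_1,\nabla w_2\in D((-\Delta)^{\mu_\alpha})$ by density, using the continuity of $\Psi$ together with the trace bound \eqref{eq:hidden-alpha0} to pass to the limit in the surface integral. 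The main obstacle is therefore twofold: justifying the fractional integration by parts in the presence of the endpoint singularities, and assigning $u(T,\cdot)$ and $u_t(T,\cdot)$ a rigorous meaning in the appropriate dual spaces so that the pairing $\langle\Psi(w_1,w_2),(w_1,w_2)\rangle$ is well defined and continuous with respect to the data.
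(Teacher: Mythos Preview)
Your approach is essentially the same as the paper's: multiply the $u$-equation by $w$, use \eqref{eq:change+-} and two ordinary integrations by parts in $t$, apply Green's identity in $x$, and cancel the interior term via the adjoint equation, leaving the endpoint pairings at $t=T$ against the boundary integral. Your discussion of the density/regularisation step actually goes beyond the paper, which presents only the formal computation; note, however, that the paper identifies $\frac{\partial}{\partial t}I_{T-}^{2-\alpha}w(T,\cdot)$ with $+\sideset{}{_{T-}^{\alpha-1}}{\mathop D}w(T,\cdot)$ (by analogy with \eqref{eq:alpha-1}) rather than $-\sideset{}{_{T-}^{\alpha-1}}{\mathop D}w(T,\cdot)$ as you write, so you should recheck that your final sign bookkeeping really reproduces \eqref{eq:psi} under the paper's convention.
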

\begin{proof}
We multiply the  equation in (\ref{eq:phi}) by the solution $w(t,x)$ of the adjoint system \eqref{eq:adjoint}-\eqref{eq:final} and integrate 
on $(0,T)\times\Omega$, that is
\begin{equation}\label{eq:mult}
\int_0^T\int_{\Omega}\sideset{^{C}}{_{0+}^{\alpha}}{\mathop D}u(t,x)w(t,x)\, dx dt
=\int_0^T\int_{\Omega}\Delta u(t,x)w(t,x)\, dx dt.
\end{equation}
Applying \eqref{eq:change+-} we obtain for any $x\in\Omega$
\begin{multline}\label{eq:intp}
\int_0^T \sideset{^{C}}{_{0+}^{\alpha}}{\mathop D}u(t,x) w(t,x)\, dt
\\
=\int_0^TI_{0+}^{2-\alpha}u_{tt}(t,x) w(t,x)\, dt
=\int_0^Tu_{tt}(t,x) I_{T-}^{2-\alpha}w(t,x)\, dt.
\end{multline}
Integrating twice by parts and taking into account that $u(0,\cdot)=u_t(0,\cdot)=0$, we have
\begin{multline}\label{eq:intp1}
\int_0^Tu_{tt}(t,x)I_{T-}^{2-\alpha}w(t,x)\, dt
\\
=u_t(T,x)I_{T-}^{2-\alpha} w(T,x)-\int_0^Tu_t(t,x)\frac{\partial}{\partial t}I_{T-}^{2-\alpha}w(t,x)\, dt
\\
=u_t(T,x)I_{T-}^{2-\alpha} w(T,x)-u(T,x)\frac{\partial}{\partial t}I_{T-}^{2-\alpha}w(T,x)
+\int_0^Tu(t,x)\sideset{}{_{T-}^{\alpha}}{\mathop D}w(t,x)\, dt.
\end{multline}
Since $-1<\alpha-2<0$, we have
$
I_{T-}^{2-\alpha}w(T,x)
=\sideset{}{_{T-}^{\alpha-2}}{\mathop D}w(T,x)
$,
see \eqref{eq:RL-T0}.
Moreover, $0<\alpha-1<1$, so by \eqref{eq:alpha-1} we have
\begin{equation*}
\frac{\partial}{\partial t}I_{T-}^{2-\alpha}w(T,x)
=\sideset{}{_{T-}^{\alpha-1}}{\mathop D}w(T,x).
\end{equation*}
Combining \eqref{eq:intp} with \eqref{eq:intp1} yields
\begin{multline}\label{eq:change1}
\int_0^T\sideset{^{C}}{_{0+}^{\alpha}}{\mathop D}u(t,x)w(t,x)\, dt
\\
=u_t(T,x)\sideset{}{_{T-}^{\alpha-2}}{\mathop D}w(T,x)-u(T,x)\sideset{}{_{T-}^{\alpha-1}}{\mathop D}w(T,x)
+\int_0^Tu(t,x)\sideset{}{_{T-}^{\alpha}}{\mathop D}w(t,x)\, dt.
\end{multline}
On the other hand, integrating twice by parts with respect to the variable $x$ we have
\begin{equation}\label{eq:change2}
\int_{\Omega}\Delta u(t,x)w(t,x)\, dx=
\int_\Omega u(t,x)\triangle w(t,x)\, dx
-\int_{\partial\Omega}u(t,x)\partial_{\nu}w(t,x) \, dx.
\end{equation}
Putting \eqref{eq:change1}  and  \eqref{eq:change2} into \eqref{eq:mult} we get
\begin{multline*}
\int_0^T\int_\Omega u(t,x)\sideset{}{_{T-}^{\alpha}}{\mathop D}w(t,x) \, dx dt 
\\
+\int_\Omega u_t(T,x)\sideset{}{_{T-}^{\alpha-2}}{\mathop D} w(T,x)\, dx
-\int_\Omega u(T,x)\sideset{}{_{T-}^{\alpha-1}}{\mathop D}w(T,x)\, dx
\\
=\int_0^T\int_\Omega u(t,x)\triangle w(t,x) \, dx dt
-\int_0^T\int_{\partial\Omega}u(t,x)\partial_{\nu}w(t,x) \, dx dt.
\end{multline*}
We recall that $w$ is the solution of the adjont problem \eqref{eq:adjoint}:
keeping  
$
\sideset{}{_{T-}^{\alpha}}{\mathop D}w
=\Delta w
$
in mind, 
from the above equation we have
\begin{multline*}
\int_\Omega u_t(T,x)\sideset{}{_{T-}^{\alpha-2}}{\mathop D} w(T,x)\, dx
-\int_\Omega u(T,x)\sideset{}{_{T-}^{\alpha-1}}{\mathop D}w(T,x)\, dx
\\
=-\int_0^T\int_{\partial\Omega}u(t,x)\partial_{\nu}w(t,x) \, dx dt.
\end{multline*}
By the final data 
\eqref{eq:final} of $w$ and the boundary conditions satisfied by $u$, see \eqref{eq:phi} we deduce that
\begin{multline*}
\langle\Psi(w_{1},w_{2}),(w_{1},w_{2})\rangle
\\
=
\int_\Omega  u(T,x) w_1(x) \, dx
-\int_\Omega  u_t(T,x)w_2(x) \, dx
=\int_0^T\int_{\partial\Omega}\big|\partial_{\nu}w(t,x)\big|^2 \, dx dt,
\end{multline*}
hence \eqref{eq:psi} is proved.
\end{proof}
%Define a semi-norm on the space $\big(C^\infty_c(\Omega)\big)^2$:
%for $(w_{0},w_{1})\in \big(C^\infty_c(\Omega)\big)^2$ we set
%\begin{equation}\label{eq:normF}
%\|(w_{0},w_{1})\|^2_{F}:=
%\int_0^T\int_{\partial\Omega}\big|\partial_{\nu}w(t,x)\big|^2 \ dx dt\,.
%\end{equation}
%We observe that 
%$\|\cdot\|_{F}$ is a semi-norm keeping
%\begin{equation}
%F=D((-\Delta)^{\frac12+\mu_\alpha})\times D((-\Delta)^{\mu_\alpha})
%\end{equation}

%\begin{ack}
%We thank X.
%\end{ack}

\begin{funding}
This research was partially supported by Progetto Ateneo 2021 Sapienza Universit\`a di Roma ``Effect of fractional time derivatives in evolution equations''.
\end{funding}

%------
% Insert the bibliography.
%------

\end{document}